\documentclass[12pt]{amsart}
\usepackage[margin=1in]{geometry}                
\usepackage{graphicx}
\usepackage{comment}

\usepackage{amsmath}
\usepackage{amssymb}
\usepackage{amsfonts}
\usepackage{amsthm}
\usepackage{mathrsfs}
\usepackage{enumerate}
\usepackage{float}
\usepackage{fancyhdr}
\usepackage{xcolor}
\usepackage{hyperref}
\usepackage{resmes}
\usepackage{tikz}

\newtheorem{theorem}{Theorem}[section]

\newtheorem{lemma}[theorem]{Lemma}
\newtheorem{corollary}[theorem]{Corollary}

\theoremstyle{definition}
\newtheorem{definition}[theorem]{Definition}

\newtheorem{remark}[theorem]{Remark}

\numberwithin{equation}{section}

\DeclareMathOperator{\vspan}{Span}

\newcommand{\R}{\mathbb{R}}

\DeclareMathOperator{\Aff}{Aff}

\begin{document}
%
%
\title[Applications of Nonlinear Projections]{Applications of Nonlinear Projections\\to Rectifiable 1-sets}

\author[R.~Bongers]{Rosemarie Bongers}
\address{Department of Applied Mathematics \\ University of California, Merced, Merced, CA}
\email{rosemariebongers@ucmerced.edu}

\author[P.~Bright]{Paige Bright}
\address{Department of Mathematics \\ Massachusetts Institute of Technology, Cambridge, MA}
\email{paigeb@mit.edu}

\author[C.~Marshall]{Caleb Marshall}
\address{Department of Mathematics \\ The University of Toronto, Toronto, ON, Canada}
\email{caleb.marshall@utoronto.ca}

\author[K.~Taylor]{Krystal Taylor}
\address{Department of Mathematics, The Ohio State University, Columbus, OH}
\email{taylor.2952@osu.edu}

\date{\today}

\begin{abstract}
Projection theorems in Euclidean space provide a fundamental link between the geometric structure of a set and the size of its lower-dimensional images. For $1$-rectifiable sets in $\mathbb{R}^d$, a classical theorem of Federer shows that the $1$-dimensional Hausdorff measure of such sets is controlled by the multiplicity-weighted lengths of finitely many linearly independent projections. We develop a framework for extending Federer's result into a diverse set of nonlinear problems. This technique yields a unified approach for studying sets through their lower-dimensional nonlinear images, as well as studying the exceptional sets which exhibit poor projective behavior.

As illustrations of our technique, we show that (i) every 1-rectifiable set contains a pin whose pinned distance set has positive Lebesgue measure, and that the exceptional set of pins for which this fails is contained in a $(d-2)$-dimensional affine subspace; (ii) planar radial projections of a 1-rectifiable set can fail to have positive length from at most one vantage point unless the set is essentially linear; and finally (iii) unions of circles centered on a 1-rectifiable set have positive area under mild assumptions on the radius function. 
\end{abstract}

\maketitle


\section{Introduction}

A foundational theme in geometric measure theory is that the geometry of a set is reflected in the size of its projections.
Typically, results are stated in terms of a set's Hausdorff dimension in a given range. At the critical dimension, one typically requires some regularity condition on the set. The most important smoothness condition is rectifiability; heuristically, rectifiable sets behave like Lipschitz manifolds. More precisely, let $d \geq 2$ and $\mathcal{H}^1$ denote the $1$-dimensional Hausdorff measure on $\mathbb{R}^d$. We say a Borel set $E \subseteq \mathbb{R}^d$ is a $1$-\textbf{rectifiable set} if there are Lipschitz mappings $f_j : \mathbb{R} \rightarrow \mathbb{R}^d$ with
$$
\mathcal{H}^1 \bigg(  E \setminus \bigcup_{j = 1}^{\infty} f_j (\mathbb{R})\bigg) = 0.
$$
Conversely, a Borel set $E \subseteq \mathbb{R}^d$ is said to be a \textbf{purely $1$-unrectifiable set} if
$$
\mathcal{H}^1 \big(E \cap f (\mathbb{R})\big) = 0 \textrm{ for every Lipschitz function } f : \mathbb{R} \rightarrow \mathbb{R}^d.
$$
Any Borel set $E \subseteq \mathbb{R}^d$  satisfying $0 < \mathcal{H}^1 (E) < \infty$ (which we refer to as \textbf{$1$-sets}), admit a pairwise-disjoint decomposition
\begin{equation}\label{eq:1setdecomposition}
E : = E_{r} \sqcup E_{u} \sqcup E_{0}
\end{equation}
where $E_r$ is a $1$-rectifiable set, $E_u$ is purely $1$-unrectifable set, and $E_0$ is an $\mathcal{H}^1$-null set. Moreover, after modifying the exact choice of decomposition, we may assume that both the sets $E_r,E_u \subseteq \mathbb{R}^d$ are Borel sets.

\subsection{Projections of 1-Rectifiable Sets}

A fundamental feature of 1-rectifiable sets in $\R^d$ is that, among any $d$ projections onto linearly independent lines, at least one must have a positive length image. 
This is a classical theorem of Federer \cite[Theorem 3.2.27]{Federer1996}, itself a complement of the famous Besicovitch--Federer projection theorem. We refer to this result as Federer's projection theorem, a version of which is stated below.

\begin{theorem}[Federer's projection theorem]\label{thm:federerprojection-intro}
Let $E \subseteq \mathbb{R}^d$ be any $1$-rectifiable set, and let $e_1,\ldots,e_d$ denote any orthonormal basis of $\mathbb{R}^d$. Writing $x = \sum_i x_i e_i$, let
$$
\pi_j(x) := x_j \in \mathbb{R}, \qquad \forall j \in \{1,\ldots,d\}.
$$
Defining
$$
a_j := \int_{\pi_j(E)} \mathcal{H}^0\big(\pi_j^{-1}(y) \cap E\big)\,d\mathcal{H}^1(y),
$$
one then has the inequality
\begin{equation}\label{eq:federerthminequality-intro}
\big(a_1^2 + \cdots + a_d^2 \big)^{1/2} \lesssim \mathcal{H}^1\big(E \big) \lesssim a_1 + \cdots + a_d.
\end{equation}
where the implied constant depends on the orthonormal basis and dimension, but not $E$.
\end{theorem}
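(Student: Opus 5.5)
The plan is to reduce both inequalities to the area formula for Lipschitz maps together with an elementary pointwise inequality between the norm of a unit vector and its coordinates.

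\textbf{Parametrization.} Since $E$ is $1$-rectifiable, the standard reduction (e.g.\ via Kirszbraun and Lusin-type arguments) gives countably many Lipschitz curves $f_k:\mathbb{R}\to\mathbb{R}^d$ and pairwise disjoint Borel sets $A_k\subseteq\mathbb{R}$ with the following properties:
\begin{itemize}
\item $f_k$ is injective on $A_k$ and differentiable there with $f_k'\neq 0$;
\item the images $E_k=f_k(A_k)$ are pairwise disjoint;
\item $\mathcal{H}^1\big(E\setminus\bigcup_k E_k\big)=0$.
\end{itemize}
An $\mathcal{H}^1$-null set contributes nothing to $\mathcal{H}^1(E)$. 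It also contributes nothing to any $a_j$: $\pi_j$ is $1$-Lipschitz, so $\pi_j$ of a null set is null, and the multiplicity integral ignores it.

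\textbf{Area formula on each piece.} Applying the area formula to $f_k$ and to $\pi_j\circ f_k$ on $A_k$ gives
\[
\mathcal{H}^1(E)=\sum_k\int_{A_k}|f_k'(t)|\,dt,
\qquad
a_j=\sum_k\int_{A_k}|(\pi_j\circ f_k)'(t)|\,dt=\sum_k\int_{A_k}|\langle f_k'(t),e_j\rangle|\,dt .
\]
The second identity uses additivity of the multiplicity function over the disjoint pieces $E_k$. Concretely,
\[
\mathcal{H}^0\big(\pi_j^{-1}(y)\cap E\big)=\sum_k\mathcal{H}^0\big(\pi_j^{-1}(y)\cap E_k\big)
\quad\text{for }\mathcal{H}^1\text{-a.e. }y,
\]
combined with the Banach indicatrix form of the area formula for the Lipschitz map $\pi_j\circ f_k$ restricted to $A_k$.

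\textbf{Pointwise comparison.} For any vector $v\in\mathbb{R}^d$ with coordinates $v_j=\langle v,e_j\rangle$,
\[
|v|\le\sum_j|v_j|.
\]
Integrating this with $v=f_k'(t)$ and summing over $k$ yields $\mathcal{H}^1(E)\le a_1+\cdots+a_d$. For the lower bound, Minkowski's integral inequality gives
\[
\Big(\sum_j a_j^2\Big)^{1/2}
=\Big\|\Big(\sum_k\int_{A_k}|\langle f_k',e_j\rangle|\,dt\Big)_j\Big\|_{\ell^2}
\le\sum_k\int_{A_k}\Big(\sum_j\langle f_k',e_j\rangle^2\Big)^{1/2}dt
=\mathcal{H}^1(E).
\]
Because the basis is orthonormal, both constants equal $1$. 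In the non-orthonormal case one instead picks up norm-equivalence constants depending on the basis.

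\textbf{Main obstacle.} The delicate step is justifying the multiplicity identity for $a_j$. One must ensure that the decomposition into the $E_k$ is genuinely disjoint, so that no point is counted twice, and that the discarded null set adds nothing to $\mathcal{H}^0(\pi_j^{-1}(y)\cap E)$ for a.e.\ $y$. This holds because its image under $\pi_j$ is $\mathcal{H}^1$-null. If $\mathcal{H}^1(E)=\infty$, the same identities hold in $[0,\infty]$ and the inequalities remain valid.
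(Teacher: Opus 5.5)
Your proof is correct. There is nothing in the paper to compare it against step by step: the paper does not prove this statement, but cites Federer's Theorem 3.2.27. It then treats the result as a black box, which it describes as "the coarea formula", resting on densities and weak tangents.

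What you wrote is essentially the standard proof of that cited result, in the case $m=1$. You use the area formula to write $\mathcal{H}^1(E)=\sum_k\int_{A_k}|f_k'|$ and $a_j=\sum_k\int_{A_k}|\langle f_k',e_j\rangle|$. You then close with $|v|\le\sum_j|v_j|$ for the upper bound and Minkowski together with $|v|^2=\sum_j\langle v,e_j\rangle^2$ for the lower bound.

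The paper's phrasing corresponds to the intrinsic version of the same computation. There one writes $a_j=\int_E|\langle\tau,e_j\rangle|\,d\mathcal{H}^1$ via the coarea formula, with $\tau$ the approximate tangent of $E$. Your parametrized route avoids tangent and density theory and uses only Rademacher plus the one-variable area formula. As a bonus, your argument shows both implied constants equal $1$, which is sharper than the paper's statement.

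Two bookkeeping points should be made explicit.
\begin{enumerate}
\item You must take $A_k\subseteq f_k^{-1}(E)$, which is Borel since $E$ is, so that $E_k\subseteq E$. Both the additivity of $\mathcal{H}^1$ and the multiplicity identity depend on this.
\item The genuine input in your parametrization step is the standard lemma splitting $\{f'\neq 0\}$ into countably many Borel pieces on which $f$ is injective (indeed bi-Lipschitz). You also need the images of these pieces to be Borel, by Lusin--Souslin, or at least analytic and hence measurable, so that you can disjointify them. Kirszbraun plays no role, since the $f_j$ are already defined on all of $\mathbb{R}$.
\end{enumerate}

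By contrast, the "main obstacle" you flag, the a.e. multiplicity identity, is the easy part: it follows immediately from countable additivity of counting measure and the fact that $\pi_j$ maps $\mathcal{H}^1$-null sets to null sets.
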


Federer's projection theorem bounds the length of $E \subseteq \mathbb{R}^d$ by the lengths of its projections $\pi_1(E)$, $\ldots$, $\pi_d(E)$, with each point $y \in \pi_j(E)$ weighted by the corresponding multiplicity of its fibre's intersection with $E$. If $E$ is a segment in the plane, then \eqref{eq:federerthminequality-intro} is nothing more than the standard triangle inequality. Despite the geometric simplicity of this description of Federer's result for line segments, the result depends on weak differential-geometric properties of $1$-rectifiable sets, such as the local density of $\mathcal{H}^1$ and the existence of weak tangents almost everywhere. Moreover, the upper bound of \eqref{eq:federerthminequality-intro} can fail if $E$ is a purely $1$-unrectifiable set of positive and finite length.

To observe this failure, take any collection of self-similar Cantor sets $\mathcal{C}_1,\ldots,\mathcal{C}_d$, each satisfying the open set condition and the dimensional conditions
$$
\min \{\dim_H \mathcal{C}_1,\ldots,\dim_H \mathcal{C}_d \} > 0, \quad \sum\limits_{j = 1}^{d} \dim_H \mathcal{C}_j = 1
$$
and define
$$
\mathcal{S}=\mathcal{S}(\mathcal{C}_1,\ldots,\mathcal{C}_d) := \mathcal{C}_1 \times \cdots \times \mathcal{C}_d.
$$
Here and throughout, $\dim_H$ refers to Hausdorff dimension. As a consequence of the open set condition (see \cite[Chapter 4.13]{Mattila1995} for a standard reference), one has that $\mathcal{H}^1 (\mathcal{S}) > 0
$
and yet, for any $j \in \{1,\ldots,d\}$, we have
$$
\mathcal{H}^1 \big(\pi_{j} (\mathcal{S})\big) = \mathcal{H}^1 \big(\mathcal{C}_{j} \big) = 0,
$$
since each $\mathcal{C}_j$ has Hausdorff dimension strictly less than one. Such product Cantor sets $\mathcal{S}$ are standard examples of \emph{purely $1$-unrectifiable sets}. These form a natural counterpoint to rectifiable sets; while rectifiable sets have a natural Lipschitz manifold structure that allows for one to employ many classical analytic tools (such as variants of Fubini's theorem), unrectifiable sets must typically be studied in an \emph{ad hoc} manner; see for instance \cite{NPV2010} for a canonical examination of Favard length problem in the plane, Marshall's work in \cite{Marshall2025} for the higher dimensional variant of the problem, and \cite{Laba2015} for a standard expository reference for the Favard length problem. As such, it is worthwhile to keep these examples of unrectifiable sets in mind when considering the following theorem of Besicovitch and Federer.

\begin{theorem}[The Besicovitch--Federer projection theorem]\label{thm:besifed}
Let $d \geq 2$ and $E \subseteq \mathbb{R}^d$ be any $1$-set. Then, the following conditions are equivalent.
\begin{enumerate}
    \item The set $E$ is a purely $1$-unrectifiable set.
    \smallskip
    \item One has $\mathcal{H}^1 (\pi_{\theta} (E)) =0$ for $\sigma^{d-1}$ almost-every $\theta \in \mathbb{S}^{d-1}$. Here, $\sigma^{d-1}$ denotes the standard surface measure on the sphere and $\pi_\theta : \R^d \to \R$ denotes orthogonal projection onto the linear subspace $\ell_\theta$ in direction $\theta$.
\end{enumerate}
\end{theorem}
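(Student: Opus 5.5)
The two implications are unequal in depth. I will prove (2) $\Rightarrow$ (1) by a short contrapositive argument built on Federer's projection theorem. For (1) $\Rightarrow$ (2) I will follow Besicovitch's density-and-cone argument.

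\emph{(2) $\Rightarrow$ (1).} Suppose $E$ is not purely $1$-unrectifiable. By the decomposition \eqref{eq:1setdecomposition}, $\mathcal{H}^1(E_r)>0$.

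1. The set of orthonormal bases of $\mathbb{R}^d$ has full measure in $(\mathbb{S}^{d-1})^d$, roughly speaking, and almost every $d$-tuple of directions is linearly independent. Since the hypothesis excludes only a $\sigma^{d-1}$-null set of directions, I can choose an orthonormal basis $e_1,\ldots,e_d$ with $\mathcal{H}^1(\pi_{e_j}(E))=0$ for every $j$. More precisely, I pick $e_1$ in the good set, then $e_2$ in the good set intersected with $e_1^\perp$, and so on.

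2. To make the choice of $e_2$ legitimate, I may first rotate so that the good set meets $e_1^\perp$ in a set of full measure on that subsphere. This holds for almost every rotation by Fubini applied on $SO(d)$. If this step is awkward, I will use Theorem \ref{thm:federerprojection-intro} for a general linearly independent set, since its constants only need the basis to be fixed.

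3. Then $\pi_j(E_r)\subseteq\pi_j(E)$ is $\mathcal{H}^1$-null for each $j$. So each $a_j$ in Theorem \ref{thm:federerprojection-intro}, applied to $E_r$, is an integral over a null set and vanishes. The upper bound in \eqref{eq:federerthminequality-intro} then gives $\mathcal{H}^1(E_r)=0$, a contradiction.

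\emph{(1) $\Rightarrow$ (2).} This is the deep direction, and I expect it to be the main obstacle.

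1. Suppose $\mathcal{H}^1(\pi_\theta(E))>0$ for every $\theta$ in a set $\Theta$ of positive $\sigma^{d-1}$ measure.

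2. Take a Lebesgue density direction $\theta_0\in\Theta$. Using the upper density bound $\Theta^{*1}(E,x)\le1$, which holds at $\mathcal{H}^1$-a.e. $x$, I will show that a positive-measure portion of $E$ meets, at small scales around each of its points, only narrow double cones transverse to $\theta_0^\perp$.

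3. I will then use Besicovitch's cone criterion: if, for a positive-measure subset $F\subseteq E$, every point $x\in F$ has $F\cap(x+C)\cap B(x,r)=\emptyset$ for some fixed cone $C$ and radius $r$, then $F$ is contained in a finite union of Lipschitz graphs. This contradicts pure unrectifiability.

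4. The hard part is producing such a cone from projection information alone. Here I would follow Mattila's presentation, Chapter 18 of \cite{Mattila1995}. Purely unrectifiable sets have, at almost every point, density in every cone comparable to their density in the ball. Integrating the multiplicity over $\Theta$ then forces the projections to have zero measure. Rather than reproduce this in full, I would cite it as classical.
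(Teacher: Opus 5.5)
The paper does not prove this theorem. It quotes it as classical background and uses it as a black box, in the reverse direction of Corollary~\ref{thm:federerdlines}. So by citing the deep direction (1)$\Rightarrow$(2) you are on the same footing as the paper.

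\textbf{The direction (2)$\Rightarrow$(1).} Your argument is a genuine proof and it is correct: the upper bound of Theorem~\ref{thm:federerprojection-intro}, applied to $E_r$ along $d$ directions with null projections, forces $\mathcal{H}^1(E_r)=0$. This is the mechanism of the paper's proof of Corollary~\ref{thm:federerdlines}. The textbook route instead uses the area formula on a Lipschitz piece of $E_r$ plus Fubini in $\theta$. Compared with that, your route treats Theorem~\ref{thm:federerprojection-intro} as a black box. Run with linearly independent rather than orthonormal directions, it even shows that when $\mathcal{H}^1(E_r)>0$ the null-projection directions lie in a single great subsphere.

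You do need to repair the frame selection. Call your ``good set'' $G$ (the full-measure set of null-projection directions).
\begin{itemize}
\item The claim that orthonormal bases have full measure in $(\mathbb{S}^{d-1})^d$ is false: they form a lower-dimensional, hence null, subset.
\item Choosing $e_1\in G$ and then $e_2\in G\cap e_1^\perp$ can fail, since $G\cap e_1^\perp$ may be empty for a badly chosen $e_1$.
\item A clean fix: each column map $U\mapsto Ue_j$ pushes Haar measure on $O(d)$ forward to normalized $\sigma^{d-1}$, so Haar-a.e.\ $U$ has all $d$ columns in $G$.
\item A simpler fix, along the lines of your fallback: $G$ has full measure, so it lies in no great subsphere and contains $d$ linearly independent directions $\theta_j$. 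Apply Theorem~\ref{thm:federerprojection-intro} to $A(E_r)$ with $A x=(\langle x,\theta_j\rangle)_j$, exactly as in the proof of Corollary~\ref{thm:federerdlines}.
\end{itemize}

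\textbf{The direction (1)$\Rightarrow$(2).} Steps 2--3 of your sketch are not an argument and should not be presented as one.
\begin{itemize}
\item A positive-measure piece of $E$ avoiding the cones about $x+\theta_0^\perp$ at a uniform scale is, by the cone criterion, a rectifiable piece. Producing it is therefore essentially the conclusion itself.
\item The upper density bound $\Theta^{*1}\le 1$ gives you no route to such a piece from projection data.
\item The classical proof runs the other way. First, the cone criterion yields a conical lower density for purely unrectifiable sets: at $\mathcal{H}^1$-a.e.\ $x$, for every direction $\theta$ and aperture $s$, the part of $E\cap B(x,r)$ inside the aperture-$s$ cone about $x+\theta^\perp$ has measure $\gtrsim rs$ for arbitrarily small $r$. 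Then an integral-geometric covering estimate over directions near a given one shows the projections are null.
\item Note the factor $s$ in that lower bound: your paraphrase ``cone density comparable to ball density'' overstates the lemma.
\end{itemize}
Since this is what you ultimately cite, that direction rests entirely on the citation, which is acceptable here because the paper does the same.
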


As a consequence of Federer's projection theorem and the Besicovitch--Federer projection theorem, one can easily obtain the following $d$-lines projection result (see Section \ref{sec:overview}).

\begin{corollary}[Federer's $d$-lines projection theorem]\label{thm:federerdlines}
    Suppose that $E \subseteq \mathbb{R}^d$ is a $1$-set with decomposition $E = E_r \sqcup E_u \sqcup E_0$ as in \eqref{eq:1setdecomposition}. The following conditions are equivalent.
    \begin{enumerate}[(1)]
        \item The $1$-rectifiable part of $E$ satisfies $\mathcal{H}^1 (E_r) > 0$.
        \medskip
        \item For any list of directions $\theta_1,...,\theta_d \in \mathbb{S}^{d-1}$ satisfying $\vspan (\theta_1,...,\theta_d) = \mathbb{R}^d$, there necessarily exists some $j^* \in \{1,...,d\}$ such that $\mathcal{H}^1\big(\pi_{\theta_{j^*}} (E)\big) > 0$.
\end{enumerate}
\end{corollary}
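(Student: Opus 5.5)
The plan is to prove the two implications separately: (1)$\Rightarrow$(2) by a linear change of variables followed by Federer's projection theorem (Theorem \ref{thm:federerprojection-intro}), and (2)$\Rightarrow$(1) by contraposition using the Besicovitch--Federer projection theorem (Theorem \ref{thm:besifed}).

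\emph{(1)$\Rightarrow$(2).} Fix $\theta_1,\dots,\theta_d$ spanning $\mathbb{R}^d$. Theorem \ref{thm:federerprojection-intro} is stated only for orthonormal bases, so I first reduce to that case.
\begin{enumerate}[(i)]
\item Define the linear map $L(x) := (x\cdot\theta_1,\dots,x\cdot\theta_d)$. Since the $\theta_j$ span, $L$ is invertible, hence bi-Lipschitz.
\item The image $L(E_r)$ is therefore again a Borel $1$-rectifiable set. Composing the Lipschitz parametrizations $f_j$ with $L$ covers it, and Borel measurability is kept because $L$ is a homeomorphism.
\item Since $L$ distorts $\mathcal{H}^1$ by at most a constant, $\mathcal{H}^1(L(E_r)) \gtrsim \mathcal{H}^1(E_r) > 0$.
\item With $e_j$ the standard basis, the $j$-th coordinate projection satisfies $\pi_j(L(E_r)) = \pi_{\theta_j}(E_r)$.
\end{enumerate}
Applying the upper bound in \eqref{eq:federerthminequality-intro} to $L(E_r)$ gives $0 < \mathcal{H}^1(L(E_r)) \lesssim a_1+\cdots+a_d$, so some $a_{j^*}>0$. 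The quantity $a_{j^*}$ is an integral over $\pi_{j^*}(L(E_r))$ with respect to $\mathcal{H}^1$. It must vanish if that image is $\mathcal{H}^1$-null, so $\mathcal{H}^1(\pi_{\theta_{j^*}}(E_r))>0$.

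The image is measurable because it is the continuous image of a Borel set, hence analytic. By monotonicity, $\mathcal{H}^1(\pi_{\theta_{j^*}}(E)) \geq \mathcal{H}^1(\pi_{\theta_{j^*}}(E_r)) > 0$. The main point needing care in this direction is the change of variables: checking that rectifiability and Borel measurability survive, and that the projections correspond exactly as in (iv). All of this is routine once $L$ is written down.

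\emph{(2)$\Rightarrow$(1), by contraposition.} Suppose $\mathcal{H}^1(E_r)=0$. Then $E$ itself is purely $1$-unrectifiable. Indeed, for any Lipschitz $f$,
$$
\mathcal{H}^1(E\cap f(\mathbb{R})) \le \mathcal{H}^1(E_u\cap f(\mathbb{R})) + \mathcal{H}^1(E_r) + \mathcal{H}^1(E_0) = 0.
$$
Since $E$ is a $1$-set, Theorem \ref{thm:besifed} gives a set $G\subseteq\mathbb{S}^{d-1}$ of full $\sigma^{d-1}$-measure such that $\mathcal{H}^1(\pi_\theta(E))=0$ for every $\theta\in G$.

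It remains to find a spanning tuple inside $G$. Consider the set of $d$-tuples $(\theta_1,\dots,\theta_d)\in(\mathbb{S}^{d-1})^d$ that fail to span; it is $\{\det(\theta_1,\dots,\theta_d)=0\}$, the zero set of a nontrivial polynomial. It is therefore null for the product measure $(\sigma^{d-1})^{\otimes d}$. This can also be seen inductively: for fixed $\theta_1,\dots,\theta_{k}$, the next vector must lie in a proper subspace, which is a $\sigma^{d-1}$-null set, and one applies Fubini. On the other hand, $G^d$ has full product measure. Hence $G^d$ contains a spanning tuple $\theta_1,\dots,\theta_d$, and every projection $\pi_{\theta_j}(E)$ is $\mathcal{H}^1$-null. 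This contradicts (2), which completes the proof.
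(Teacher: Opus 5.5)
Your proof is correct and follows the paper's route. Your map $L$ is exactly the paper's canonical embedding $\mathsf{H}$, to which Federer's projection theorem is applied, and the converse is the same contrapositive use of the Besicovitch--Federer theorem. The only differences are extra care on your part: you work with $E_r$ and then use monotonicity, where the paper simply takes $E$ rectifiable. You also use a Fubini argument to find a spanning tuple of bad directions, whereas the paper observes that a bad set containing no spanning tuple lies in a proper subspace and is therefore $\sigma^{d-1}$-null.
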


In this paper, we develop a technique for using Federer's projection theorem in an array of nonlinear projection problems in geometric measure theory. Our framework has numerous applications for problems which do not immediately exhibit a projective nature: in the following sections, we study pinned distance sets, radial projections, and the areas of unions of circles. Our technique allows us to develop a broad framework for showing that the lower-dimensional images of a rectifiable set under a broad class of nonlinear maps must be large. Moreover, we obtain structural results for exceptional sets where the nonlinear projections are atypically small. We postpone background of these topics to the corresponding sections so that we may first give an overview of our technique and main results. 

\subsection{Overview of our technique and main results} \label{sec:overview}

The classical Besicovitch--Federer projection theorem applies in one particular context: we have a family of $d$ orthogonal projections into a one-dimensional space. We have both a structural result (that a rectifiable set has positive length projections in almost every direction) and a more precise combinatorial result for sets which are not purely unrectifiable (that given any set of $d$ linearly independent directions, at least one of the images will have positive length). The key utility of this result is its replacement of a high-dimensional problem ($1$-rectifiability in $\mathbb{R}^d$) with a collection of low-dimensional problems (positive length in $\mathbb{R}$). The main result of this paper is a technique which substantially generalizes the kinds of maps which can be used to detect rectifiability, as well as to derive better combinatorial information from them. 

At a high level, the technique involves four steps:

\begin{enumerate}[(1)]
\setcounter{enumi}{0}
    \item Fix a domain $\Omega \subseteq\mathbb{R}^d$ together with an indexed family $\{\varphi_{\alpha_1}, \dots, \varphi_{\alpha_d}\}$ of maps $\Omega \to \mathbb{R}$. Form the \emph{canonical embedding} 
    $$\mathsf{H}_{\vec \alpha}(z) = \left(\varphi_{\alpha_1}(z), \dots, \varphi_{\alpha_d}(z)\right)$$
    with $\mathsf{H}_{\vec \alpha} : \Omega \to \mathbb{R}^d.$
    \item Prove that for any sufficiently nice choice of indices $\vec \alpha$, the associated $\mathsf{H}_{\vec \alpha}$ preserves positivity of measure, in the sense that
    $$\mathcal{H}^1(E) > 0 \implies \mathcal{H}^1(\mathsf{H}_{\vec \alpha}(E)) > 0$$
    for all sets $E \subseteq \Omega$. This part of the argument relies upon local bilipschitzness, a co-Lipschitz lower bound, or another explicit positivity-preserving condition adapted to the particular family of curve projections. On the localized portion under consideration, this also ensures that $\mathsf{H}_{\boldsymbol{\alpha}}(E)$ is rectifiable whenever $E$ is.
    \item There is a natural correspondence between the orthogonal projections $\pi_{j} \circ \mathsf{H}_{\vec \alpha}$ and the nonlinear projections $\varphi_{\alpha_j}$. Use the linear version of Federer's projection theorem to conclude that, if $E$ is $1$-rectifiable, then there is a ``good'' index $j$ for which $$\mathcal{H}^1(\varphi_{\alpha_j}(E)) > 0.$$
    \item Extract combinatorial information from the previous step: ``any'' collection of $d$ indices $\{\alpha_1, \dots, \alpha_d\}$ will include a good index. Use this to prove structural results regarding the exceptional set of ``bad'' indices $\alpha$ for which $\mathcal{H}^1(\varphi_{\alpha}(E)) = 0$. 
\end{enumerate}

The utility of this technique is two-fold. By organizing the work so that Federer's projection theorem (i.e. the coarea formula used in step (3), Theorem \ref{thm:federerprojection-intro}) is treated as a black box, the technical difficulty is relocated to step (2) and merely requires showing that the associated canonical embedding $\mathsf{H}_{\vec \alpha}$ preserves positivity of length. In many contexts, this can be done via ordinary calculus or by identifying that the embedding is bilipshictz. Moreover, the combinatorial information in step (4) can be used to prove structural results regarding the exceptional set of ``bad'' indices for which $\mathcal{H}^1(\varphi_{\alpha_j}(E)) = 0$.

As a first illustration of the technique, we begin with a proof of the $d$-lines theorem using this language. If we choose $\varphi_{j}$ to be orthogonal projection onto $\ell_{\theta_j}$, then the associated canonical embedding $\mathsf{H}_{\vec \alpha}$ in step (1) is a linear automorphism which clearly preserves positivity of length; the combinatorial information in step (4) is precisely the $d$-lines theorem. 

\begin{proof}[Proof of Corollary \ref{thm:federerdlines}] 
Fix a $1$-rectifiable set $E \subseteq \mathbb{R}^d$ with positive length and a family of directions $\{\theta_1, \dots, \theta_d\}$ satisfying $\mathrm{Span}(\theta_1, \dots, \theta_d) = \R^d$. Let $\varphi_j$ denote orthogonal projection onto $\ell_{\theta_j}$ and observe that the associated canonical embedding $\mathsf{H}(z) = (\varphi_1(z), \dots, \varphi_d(z))$ is a linear automorphism on $\mathbb{R}^d$. It immediately follows that $\mathcal{H}^1(\mathsf{H}(E)) > 0$. By Federer's projection theorem,
$$0 < \mathcal{H}^1(\mathsf{H}(E)) \lesssim \sum_{j = 1}^d \int_{\mathbb{R}} \mathcal{H}^0\big(\pi_j^{-1}(y) \cap \mathsf{H}(E)\big) \, d\mathcal{H}^1(y).$$
Choose an index $j^{\ast}$ for which the selected summand is positive; there is a set $T \subseteq \mathbb{R}$ with positive length such that
$$t \in T \implies \mathcal{H}^0\big(\pi_{j^{\ast}}^{-1}(t) \cap \mathsf{H}(E)\big) > 0 \implies \pi_{j^{\ast}}^{-1}(t) \cap \mathsf{H}(E) \ne \emptyset.$$
There is a relationship between the intersection of the fibers $\pi_{j^{\ast}}^{-1}(\cdot)$ with $\mathsf{H}(E)$ and the corresponding intersection of the fibers $\varphi_{j^{\ast}}^{-1}(\cdot)$ with $E$:
\begin{align*}
    z \in \pi_{j^{\ast}}^{-1}(t) \cap \mathsf{H}(E) &\implies \pi_{j^{\ast}}(z) = t \text{ and } z \in \mathsf{H}(E) \\
    &\implies t \in \pi_{j^{\ast}}(\mathsf{H}(E)) \\
    &\implies t \in \varphi_{j^{\ast}}(E) \\
    &\implies \varphi_{j^{\ast}}^{-1}(t) \cap E \ne \emptyset.
\end{align*}
Therefore, whenever $t \in T$ we have that $\varphi_{j^{\ast}}^{-1}(t) \cap E \ne \emptyset.$ Integrating over $t$ shows that $\mathcal{H}^1(\varphi_{j^{\ast}}(E)) > 0$; this is the desired result, recalling that $\varphi_{j^{\ast}} = \pi_{\theta_{j^{\ast}}}.$

For the other direction of the equivalence, note that if $E$ is a set for which any list of directions $\{\theta_1, ..., \theta_d\} \subseteq \mathbb{S}^{d - 1}$ spanning $\mathbb{R}^d$ has a ``good index'' $j^{\ast}$ (i.e. for which $\mathcal{H}^1\big(\pi_{\theta_j^*}(E)\big) > 0$), then the corresponding set of ``bad directions'' (i.e. those $\theta$ for which $\mathcal{H}^1\big(\pi_{\theta}(E)\big) = 0$) must have $\sigma^{d - 1}$ measure zero. By Theorem \ref{thm:besifed}, $E$ is not purely $1$-unrectifiable and so $\mathcal{H}^1(E_r) > 0$. 
\end{proof}

 In the following sections, we apply our technique in a number of different contexts, especially to problems that do not immediately appear projective in nature.

\begin{itemize}
\item In Section 2, we consider the maps $\varphi_{p}(z) = |z - p|^2$ indexed by points $p \in \mathbb{R}^d$. The workflow enables us to prove that for any $1$-rectifiable set $E \subseteq \mathbb{R}^d$ which is not essentially $1$-flat, the pinned distance set
$$\Delta_p(E) = \{|z - p| : z \in E\}$$
has positive length for Lebesgue-almost every pin $p \in \mathbb{R}^d$. Moreover, a more careful consideration of the combinatorial information will show that the exceptional set of bad pins must actually be contained in a $(d-2)$-dimensional affine subspace of $\mathbb{R}^d$. 
\item In Section 3, we take $\varphi_{p}$ to be an arctangent function relative to the point $p \in \mathbb{R}^2$ and use the workflow to prove that for a $1$-rectifiable set $E \subseteq \mathbb{R}^2$, the radial projection $\pi_{p}(E)$ from the vantage point $p \in \mathbb{R}^2$ will have positive length for almost every $q$. The resulting structural statement is that the collection of ``bad'' vantage points from which $\pi_p(E)$ has zero length must in fact be contained in a straight line. 
\item In Section 4, we use the variable-radius version of a curve-based projection map appearing in the work of Simon and Taylor \cite{ST2022}; see also Li and Taylor \cite{LT2026}. We prove that if $E \subseteq \mathbb{R}^2$ is $1$-rectifiable, then the collection of variable-radii circles
$$\bigcup_{z \in E} C(z, r(z))$$
has positive area given a very mild regularity assumption on the radius function $r$.
\end{itemize}
\noindent It is worth noting that recent work of Li and Taylor obtains a quantitative 2-lines theorem for nonlinear projections in $\R^2$. As a consequence, their work gives alternative proofs of our main pinned distance set and radial projection results established in Sections \ref{sec:pinneddistance} and \ref{sec:radproj}; see \cite[Corollary 2.8 and Theorem 2.10]{LT2026}.

\subsection{Acknowledgments}
This work grew out of collaborations fostered by the American Institute of Mathematics through the SQuaREs project \textit{Covering Fractals by Curves}. P.B. was supported by a \textit{MathWorks Fellowship} at MIT. C.M. is supported by an \textit{Arts and Science Postdoctoral Fellowship} at the University of Toronto, as well as a \textit{Canadian Postdoctoral Fellowship Award} from the National Science and Engineering Research Council of Canada. K.T. is supported in part by Simons Foundation Grant GR137264.

\section{Pinned distance sets}\label{sec:pinneddistance}

Given a point $p \in \mathbb{R}^d$ and a set $E\subseteq \mathbb{R}^d$, the \emph{pinned distance set} of $E$ at $p$ is 
$$\Delta_p(E) = \{|z - p| : z \in E\} \subseteq \R.$$
We refer to a point $p$ as a \emph{good pin} if $\mathcal H^1(\Delta_p(E)) > 0$ and a \emph{bad pin} otherwise. We will let $\mathcal{B}$ denote the set of bad pins, allowing the dependence on $E$ to be implicit.

The classical Falconer distance set conjecture asks how large a Borel set $E\subseteq\mathbb{R}^d$ must be in order that its unpinned distance set $\{|x-y|:x,y\in E\}$ have positive length. Falconer \cite{Falconer1985} showed that the threshold cannot be below $d/2$ and conjectured that $\dim_H(E)>d/2$ should suffice. A stronger pinned conclusion asks for a point $p\in E$ such that $\mathcal H^1(\Delta_p(E)) >0$. The conjectured threshold remains open for every $d\geq2$. 

It is not immediately obvious that this is a projective problem; however, significant progress has been made using projection theory. The best results in this direction are due to Guth--Iosevich--Ou--Wang and Du--Ou--Ren--Zhang, who collectively prove that for all compact sets $E\subseteq \R^d$ with $\dim_H(E)> f(d)$, there exists a $p\in E$ such that $\mathcal H^1(\Delta_p(E))>0$, where
\[
f(d) = \begin{cases}
    \frac{5}{4}, & d=2 \quad \quad \text{(Guth--Iosevich--Ou--Wang \cite{GIOW2020})} \\
    \frac{d}{2} + \frac{1}{4} - \frac{1}{8d + 4}, & d\geq 3 \quad \quad \text{(Du--Ou--Ren--Zhang \cite{DORZ2023})}.
\end{cases}
\]
Both of these results make use of radial projections, the former making use of work of Orponen's projection theorem \cite{Orponen2019} and the latter using Ren's \cite{Ren2023}. See Section \ref{sec:radproj} for more background on radial projections. 

Utilizing rectifiability, and in particular Federer's projection theorem, we obtain a pinned distance set result for 1-rectifiable sets that are not essentially $k$-flat. 

\begin{definition}
    A set $E \subseteq \R^d$ is \emph{essentially $k$-flat} if there exists a $k$-plane $P$ such that $\mathcal H^1(E \setminus P) = 0$.
\end{definition}

\begin{remark}
Note that the above is a slight variant of Orponen and Sahlsten's definition of essentially $k$-flat \cite{OS2011}, in which $0< \mathcal H^k(E)$ and $\mathcal H^k(E\setminus P) = 0$ for some $k$-plane $P$. On the other hand, in the radial projection section, our definition of $k$-flat will completely match with Orponen--Sahlsten as our attention will be focused to the plane.
\end{remark}

We begin with a qualitative result; as indicated by the general workflow above, this will be followed with structural results extracted combinatorially from the qualitative theorem.
\begin{theorem}\label{thm:main_pinned_result}
Let $E \subseteq \mathbb{R}^d$ be a 1-rectifiable set with $\mathcal{H}^1(E) > 0$ and $\mathcal{P} = \{p_1, p_2, \dots, p_d\} \subseteq \mathbb{R}^d$ be a set of $d$ pins. If $\mathcal{P}$ is affinely independent and $E$ is not essentially $(d-1)$-flat, then $\mathcal{P}$ contains 
at least one pin $p$ for which $\mathcal{H}^1(\Delta_p(E))>0.$
\end{theorem}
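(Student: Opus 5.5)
The plan is to follow the four-step workflow with the squared-distance maps $\varphi_{p_j}(z) = |z-p_j|^2$. Form the canonical embedding
$$\mathsf{H}(z) = \big(|z-p_1|^2, \dots, |z-p_d|^2\big), \qquad \mathsf{H} : \mathbb{R}^d \to \mathbb{R}^d.$$
The first task is to understand where $\mathsf{H}$ fails to be a local diffeomorphism. Let $A := \Aff(p_1,\dots,p_d)$, which is an affine hyperplane because $\mathcal{P}$ is affinely independent.

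The Jacobian of $\mathsf{H}$ at $z$ has rows $2(z-p_j)$. These rows are linearly dependent exactly when $z,p_1,\dots,p_d$ lie in a common affine hyperplane, that is, when $z \in A$. So $\mathsf{H}$ is a smooth local diffeomorphism on $\mathbb{R}^d \setminus A$.

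Injectivity can be checked directly from the differences
$$|z-p_j|^2-|z-p_1|^2 = -2z\cdot(p_j-p_1)+|p_j|^2-|p_1|^2, \qquad j=2,\dots,d.$$
These are affine in $z$ and determine $z$ up to the line through it orthogonal to $A$. The remaining coordinate $|z-p_1|^2$ then fixes $z$ up to reflection across $A$. Hence $\mathsf{H}$ is injective on each open half-space $U_\pm$ of $\mathbb{R}^d\setminus A$.

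Next I localize. Since $E$ is not essentially $(d-1)$-flat, $\mathcal{H}^1(E\setminus A)>0$. By inner regularity I choose a compact $K\subseteq E\cap U_+$ (or $U_-$) with $\mathcal{H}^1(K)>0$; such a $K$ lies at positive distance from $A$ and inside a bounded set. On a compact neighbourhood of $K$ inside $U_+$, $\mathsf{H}$ is smooth and injective with nonvanishing Jacobian determinant. By the inverse function theorem together with compactness, $\mathsf{H}$ is then bi-Lipschitz there: its inverse is a smooth map on the compact image, and covering by finitely many small balls gives a global Lipschitz constant for the inverse on $\mathsf{H}(K)$.

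From this I conclude that $\mathsf{H}(K)$ is $1$-rectifiable, being a Lipschitz image of a rectifiable set, and that $\mathcal{H}^1(\mathsf{H}(K))\gtrsim \mathcal{H}^1(K)>0$. I expect this bi-Lipschitz step to be the main (though essentially routine) obstacle. The care needed is to stay off the fold set $A$, where $\mathsf{H}$ degenerates and is $2$-to-$1$; this is exactly where both hypotheses enter.

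Now apply Federer's projection theorem (Theorem \ref{thm:federerprojection-intro}) to $\mathsf{H}(K)$ with the standard basis. Some summand $a_{j^*}$ is positive, so, exactly as in the proof of Corollary \ref{thm:federerdlines}, $\pi_{j^*}(\mathsf{H}(K))$ has positive length. But $\pi_{j^*}\circ\mathsf{H}=\varphi_{p_{j^*}}$, so the set $S:=\{|z-p_{j^*}|^2: z\in K\}$ has $\mathcal{H}^1(S)>0$.

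Finally, $\Delta_{p_{j^*}}(E)\supseteq \sqrt{S}$. The map $t\mapsto t^2$ is Lipschitz on bounded sets and sends $\sqrt S$ onto $S$. So if $\sqrt S$ were null, $S$ would be null, a contradiction. Hence $\mathcal{H}^1(\Delta_{p_{j^*}}(E))>0$, and $p_{j^*}$ is the required good pin.
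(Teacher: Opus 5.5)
Your proposal is correct and follows the paper's proof. Both use the same squared-distance canonical embedding $\mathsf{H}$, localize to a piece of $E$ at positive distance from $\Aff\mathcal{P}$, apply Federer's projection theorem, and unwind the square root to pass from $\varphi_{p_{j^*}}$ to $\Delta_{p_{j^*}}(E)$.

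The only difference is how you justify that $\mathsf{H}$ preserves rectifiability and positive length. The paper computes the Jacobian determinant explicitly (Lemma~\ref{lemma:pinned_distance_canonical_embedding}, giving $2^d\pi_d(z)\det[p_1-p_2,\dots,p_1-p_d]$ on $\Omega_\epsilon$) and concludes from its bounds alone. Your fold-set and injectivity analysis, combined with the compactness argument, gives the needed bi-Lipschitz estimate more rigorously than that step in the paper.
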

As indicated in the framework above, the primary tool in establishing this result is a regularity result for the canonical embedding corresponding to appropriately-chosen projection maps $\varphi : \mathbb{R}^d \to \mathbb{R}$. To this end, we have the following lemma:
\begin{lemma}\label{lemma:pinned_distance_canonical_embedding} For each point $p \in \mathbb{R}^d$, define $\varphi_p(z) = |z - p|^2$. Given a collection $\mathcal{P} = \{p_1, p_2, \cdots, p_d\} \subseteq \mathbb{R}^d$ such that $\Aff \mathcal{P}  = \mathbb{R}^{d - 1} \times \{0\}$, form the associated canonical embedding
$$\mathsf{H}_{\mathcal{P}}(z) = \big(\varphi_{p_1}(z), \dots, \varphi_{p_d}(z)\big).$$
Finally, given any $\epsilon > 0$, let $\Omega_{\epsilon} = \mathbb{R}^{d - 1} \times (\epsilon, 1/\epsilon).$ Under these assumptions, the Jacobian determinant $\det D\mathsf{H}_{\mathcal{P}} : \Omega_{\epsilon} \to \mathbb{R}$ is bounded away from zero and infinity.
\end{lemma}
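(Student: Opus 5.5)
Compute the derivative directly. Since $\varphi_{p}(z) = |z-p|^2$, we have $\nabla \varphi_p(z) = 2(z-p)$. Hence $D\mathsf{H}_{\mathcal{P}}(z)$ is the $d\times d$ matrix whose $j$-th row is $2(z-p_j)^T$. The plan is to subtract the first row from the others, which leaves the determinant unchanged. Row $j\ge 2$ becomes $2(p_1-p_j)^T$, and these rows do not depend on $z$. Write $p_j = (q_j, 0)$ with $q_j \in \mathbb{R}^{d-1}$ and $z=(x,t)$ with $x\in\mathbb{R}^{d-1}$, $t\in\mathbb{R}$. Then the rows $2(p_1-p_j)^T$, $j\ge2$, all have last coordinate $0$, while the first row has last coordinate $2t$.

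Expand the determinant along the last column. Only the $(1,d)$ entry $2t$ is nonzero there, so
$$\det D\mathsf{H}_{\mathcal{P}}(z) = \pm\, 2t \cdot 2^{d-1}\det\big[(q_1-q_j)^T\big]_{j=2}^{d},$$
where the remaining determinant is of the $(d-1)\times(d-1)$ matrix with rows $q_1 - q_j$. The $\pm$ sign comes from the cofactor and is fixed. By hypothesis $\Aff\mathcal{P} = \mathbb{R}^{d-1}\times\{0\}$. This means $p_1,\dots,p_d$ are affinely independent and span that hyperplane, so the vectors $q_1-q_j$, $j=2,\dots,d$, are linearly independent in $\mathbb{R}^{d-1}$. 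Thus $c_{\mathcal{P}} := 2^{d}\,|\det[(q_1-q_j)^T]|$ is a nonzero constant depending only on $\mathcal{P}$, and $|\det D\mathsf{H}_{\mathcal{P}}(z)| = c_{\mathcal{P}}|t|$.

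On $\Omega_\epsilon$ we have $t\in(\epsilon,1/\epsilon)$, so $c_{\mathcal{P}}\epsilon < |\det D\mathsf{H}_{\mathcal{P}}| < c_{\mathcal{P}}/\epsilon$. This is the claim; the bounds are uniform in the unbounded $x$-directions because the Jacobian depends only on $t$.

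The only step needing care is the row reduction together with the affine-independence bookkeeping, namely checking that $\Aff\mathcal{P}$ being a full hyperplane forces the $(d-1)\times(d-1)$ minor to be nonzero. No genuine obstacle is expected. One should also note that the lemma controls only the determinant, not the operator norm of $D\mathsf{H}_{\mathcal{P}}$, which grows with $|x|$. Any later bilipschitz use will therefore need a further localization to bounded sets, but that is not required for this statement.
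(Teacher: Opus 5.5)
Your proposal is correct and matches the paper's proof: compute $\nabla\varphi_{p_j} = 2(z-p_j)$, subtract the first row, and expand along the coordinate transverse to $\mathbb{R}^{d-1}\times\{0\}$. This gives $\det D\mathsf{H}_{\mathcal P}(z) = \pm 2^d\,\pi_d(z)\det[p_1-p_2,\dots,p_1-p_d]$, which is nonzero by affine independence and lies between constant multiples of $\epsilon$ and $1/\epsilon$ on $\Omega_\epsilon$; the paper works with columns, which is just the transpose. Your extra bookkeeping (the cofactor sign, and deducing affine independence from $\Aff\mathcal P$ being a full hyperplane) slightly tightens the paper's write-up, and your remark that the determinant bound alone does not control $\|D\mathsf{H}_{\mathcal P}\|$ on unbounded sets is accurate but lies outside this lemma.
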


\begin{proof}
The argument proceeds by a direct calculation of the Jacobian determinant. To this end, note that
$$\nabla \varphi_{p_j} = 2 (z - p_j).$$
Regarding each point in $\mathbb{R}^d$ as a $d \times 1$ column vector, it follows that
$$D\mathsf{H}_{\mathcal{P}} = 2 \begin{bmatrix} | & | & \dots & | \\ z - p_1 & z - p_2 & \dots & z - p_d \\
| & | & \dots & |\end{bmatrix}.$$
Note that subtracting the first column from any other column does not change the value of the determinant of this matrix; carrying this out in each column yields
$$\det(D\mathsf{H}_{\mathcal{P}}) = 2^d \det \begin{bmatrix}
    | & | & \dots & | \\ 
    z - p_1 & z - p_2 & \dots & z - p_d \\
    | & | & \dots & |
\end{bmatrix}.$$
Each $p_i - p_j$ is contained in $\mathbb{R}^{d - 1} \times \{0\}$; the final coordinate $\pi_d(z)$ of $z$ is between $\epsilon$ and $1/\epsilon$. Our matrix therefore has a useful block structure, where a natural abuse of notation allows us to regard $q_i - q_j$ as $(d - 1)$-vectors:
$$\det(D\mathsf{H}_{\mathcal{P}}) = 2^d \det \begin{bmatrix}
    \pi_{\mathbb{R}^{d - 1}}(z - q_1) & \begin{bmatrix} | & \dots & | \\ p_1 - p_2 & \dots & p_1 - p_d \\ | & \dots & |\end{bmatrix} \\ \pi_d(z) & - \quad \vec 0 \quad - 
\end{bmatrix}.$$
By cofactor expansion along the final row, we have
$$\det(D\mathsf{H}_{\mathcal P}) = 2^d \pi_d(z) \det
\begin{bmatrix} | & \dots & | \\ p_1 - p_2 & \dots & p_1 - p_d \\ | & \dots & |\end{bmatrix}.
$$
Since the points $\{p_1, p_2, \dots, p_d\}$ are affinely independent, this final determinant is non-zero. Moreover, by our choice of domain $\Omega_{\epsilon}$, $\pi_d(z)$ is between $\epsilon$ and $1/\epsilon$. Since the Jacobian determinant of $\mathsf{H}_{\mathcal{P}}$ is bounded away from both zero and infinity, the result follows. 
\end{proof}

For our results on pinned distance sets, it would have been more natural to use $\varphi_p(z) = |z - p|$ rather than the squared version; however, that would have made the derivative calculations substantially more challenging. This will not be an obstacle to using these particular maps, however; it will be addressed in the proof of the main theorem of this section, which we are now ready to address.

\begin{proof}[Proof of Theorem \ref{thm:main_pinned_result}]
Assume that $\mathcal{P}$ is affinely independent and that $E$ is not essentially $(d - 1)$-flat. Let $V_{\mathcal{P}} = \Aff \mathcal{P}$ and $V_{\mathcal{P}}(\epsilon)$ be its corresponding $\epsilon$-neighborhood in $\mathbb{R}^d$. By the dominated convergence theorem, if $\epsilon$ is sufficiently small, we have
$$\mathcal{H}^1(E \setminus V_{\mathcal P}(\epsilon)) > 0.$$
Following a rotation, we may therefore assume without loss of generality that $V_{\mathcal{P}} = \mathbb{R}^{d - 1} \times \{0\}$ and that $$E \subseteq \Omega_{\epsilon} = \mathbb{R}^{d - 1} \times (\epsilon, 1/\epsilon).$$
Form the canonical embedding $\mathsf{H}_{\mathcal{P}}$ as before; by Lemma \ref{lemma:pinned_distance_canonical_embedding}, its Jacobian determinant is bounded below; furthermore, the function $\mathsf{H}_{\mathcal{P}}$ is locally Lipschitz due to the upper bound on $\det D\mathsf{H}_{\mathcal{P}}$. Therefore $\mathsf{H}_{\mathcal{P}}(E)$ is a $1$-rectifiable set in $\mathbb{R}^d$ with positive length. By Federer's projection theorem,
$$0 < \mathcal{H}^1(\mathsf{H}_{\mathcal P}(E)) \lesssim \sum_{j = 1}^d \int_{\mathbb{R}} \mathcal{H}^0\big(\pi_j^{-1}(t) \cap \mathsf{H}_{\mathcal{P}}(E)\big) d\mathcal{H}^1(t),$$
where $\pi_j$ is orthogonal projection onto the $j$-th coordinate axis in $\mathbb{R}^d$. Select an index $j$ for which
$$\int_{\mathbb{R}} \mathcal{H}^0\big(\pi_j^{-1}(t) \cap \mathsf{H}_{\mathcal{P}}(E)\big) d\mathcal{H}^1(t) > 0.$$
Fix any $t$ for which $\mathcal{H}^0\big(\pi_j^{-1}(t) \cap \mathsf{H}_{\mathcal{P}}(E)\big) > 0$; it is worth noting that $t > 0$ because $\mathsf{H}_{\mathcal P}(E)$ is contained in the first orthant of $\mathbb{R}^d$. Since $\mathcal{H}^0$ is the counting measure, there is a point $w \in \pi_j^{-1}(t) \cap \mathsf{H}_{\mathcal{P}}(E)$ and thus a $z \in E$ with $\mathsf{H}_{\mathcal{P}}(z) = w$. Since $\pi_j$ simply gives the $j$-th coordinate of a point, the definition of $\mathsf{H}_{\mathcal{P}}$ gives us that $\varphi_{p_j}(z) = t.$ Therefore, $\varphi_{p_j}^{-1}(t) \cap E \ne \emptyset$. In short,
$$\mathcal{H}^0\big(\pi_j^{-1}(t) \cap \mathsf{H}_{\mathcal P}(E)\big) > 0 \implies \mathcal{H}^0(\varphi_j^{-1}(t) \cap E\big) > 0.$$
Therefore, the support of $t \mapsto \mathcal{H}^0\big(\varphi_{p_j}^{-1}(t) \cap E\big)$ contains the support of $t \mapsto \mathcal{H}^0\big(\pi_j^{-1}(t) \cap \mathsf{H}_{\mathcal{P}}(E)\big)$; as the latter as positive length, so does the former.

By the above reasoning, there is an index $j$ and a set $T_j \subseteq [0, \infty)$ with positive length for which 
$$t \in T_j \implies \varphi_{p_j}^{-1}(t) \cap E \ne \emptyset.$$
Interpreting this in terms of pinned distance sets, we have that 
$$t \in T_j \implies t^{1/2} \in \Delta_{p_j}(E).$$
Since $T_j \subseteq [0, \infty)$ has positive length, so does $\{t^{1/2} : t \in T_j\}.$ Therefore, $\mathcal H^1(\Delta_{p_j}(E)) > 0$ and $p_j$ is a good pin, as desired.
\end{proof}

We next establish the accompanying structural results.
The key challenge here is that a general $1$-rectifiable subset of $\mathbb{R}^d$ may be contained in a low-dimensional affine subspace, and so does not immediately meet the non-flatness assumption of Theorem \ref{thm:main_pinned_result}. Observe that $E$ is essentially $d$-flat but not essentially $0$-flat, since it is contained in $\mathbb{R}^d$ and has more than one point. As such, we may choose a dimension $1 \le k \le d$ such that $E$ is essentially $k$-flat but not essentially $(k - 1)$-flat. In this case, we may assume without loss of generality that $E \subseteq \mathbb{R}^k \times \{0\}^{d - k}$; alternatively, replacing $E$ with its orthogonal projection into $\mathbb{R}^k$ allows us to regard $E$ as a subset of $\mathbb{R}^k$ itself. Our first structural result is on the existence of a good pin within $E$.

\begin{theorem}\label{thm:pinned_distance_structural_good}
If $E \subseteq \mathbb{R}^d$ is a $1$-rectifiable set with positive length, then $E$ contains a good pin.
\end{theorem}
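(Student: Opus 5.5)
We reduce to the non-flat setting just described. Choose $1 \le k \le d$ minimal so that $E$ is essentially $k$-flat, and, after discarding an $\mathcal{H}^1$-null set (which changes neither rectifiability nor positivity of length, and can only shrink pinned distance sets) and applying a rigid motion (which preserves all distances), assume $E \subseteq \mathbb{R}^k \times \{0\}^{d-k}$, regarded as a subset of $\mathbb{R}^k$. Pinned distances from points of $E$ computed in $\mathbb{R}^k$ agree with those computed in $\mathbb{R}^d$, so it suffices to find $p \in E$ with $\mathcal{H}^1(\Delta_p(E))>0$ in $\mathbb{R}^k$. We split into the case $k=1$ and the case $k \ge 2$.

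\textbf{Case $k=1$.} Here $E$ is, up to a null set, a subset of a line, identified with $\mathbb{R}$, with $\mathcal{H}^1(E)>0$. For any $p \in E$, $\Delta_p(E) = \{|z-p| : z\in E\}$ is the image of $E$ under $z \mapsto |z-p|$, which is an isometry on each of $E\cap[p,\infty)$ and $E\cap(-\infty,p]$. At least one of these halves has positive length, so $\Delta_p(E)$ has positive length for every $p\in E$. In fact, every pin is good in this case.

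\textbf{Case $k\ge 2$.} Now $E\subseteq\mathbb{R}^k$ is not essentially $(k-1)$-flat, and we want to apply Theorem \ref{thm:main_pinned_result} in $\mathbb{R}^k$. For this we need $k$ affinely independent pins, all lying in $E$. Since $E$ is not essentially $(k-1)$-flat, it is not contained, even up to a null set, in any affine hyperplane. We pick $p_1,\dots,p_k \in E$ inductively: once $p_1,\dots,p_j$ are chosen and affinely independent with $j<k$, their affine hull $A_j$ has dimension $j-1 \le k-2$. It is contained in some hyperplane, so $\mathcal{H}^1(E\setminus A_j)>0$, and in particular some $p_{j+1}\in E\setminus A_j$ exists. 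The new family $p_1,\dots,p_{j+1}$ is then affinely independent.

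With this choice of $\mathcal{P}=\{p_1,\dots,p_k\}\subseteq E$, Theorem \ref{thm:main_pinned_result} (applied in dimension $k$) gives some $p_j\in\mathcal{P}$ with $\mathcal{H}^1(\Delta_{p_j}(E))>0$. That $p_j$ is a good pin lying in $E$.

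\textbf{Main obstacle.} The only delicate point is the bookkeeping in the reduction. We must check that Theorem \ref{thm:main_pinned_result} is legitimately applied in $\mathbb{R}^k$ rather than $\mathbb{R}^d$: the non-flatness hypothesis must hold relative to $k$, and the pins must lie in the subspace. We must also check that discarding the null set $E\setminus P$ leaves the chosen pins inside the original $E$; this holds because we choose the pins from the reduced set, which is a subset of $E$. The $k=1$ case has to be handled separately because Theorem \ref{thm:main_pinned_result} with $d=1$ would require $E$ not to be essentially $0$-flat, and the argument above covers it directly.
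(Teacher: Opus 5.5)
Your proposal is correct and matches the paper's proof: pass to the minimal $k$ for which $E$ is essentially $k$-flat, regard $E$ as a subset of $\mathbb{R}^k$, pick $k$ affinely independent points of $E$, and apply Theorem \ref{thm:main_pinned_result} in $\mathbb{R}^k$. Your inductive choice of the affinely independent points (using that $E$ is not essentially $(k-1)$-flat) fills in a step the paper only asserts, and your direct treatment of $k=1$ parallels the paper's $d=1$ case while avoiding the paper's implicit use of Theorem \ref{thm:main_pinned_result} in dimension one (a use that would still be valid, since a set of positive length is never essentially $0$-flat).
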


\begin{proof}
If $d = 1$, the result is immediate: every pin in $\mathbb{R}$ is a good pin. Otherwise let $d \ge 2$ and choose the $k \ge 1$ for which $E$ is essentially $k$-flat but not essentially $(k - 1)$-flat and regard $E$ as a subset of $\mathbb{R}^k$. Note that such a $k$ exists as $E$ has positive length. Choose any collection $\mathcal{Q}$ of $k$ points from $E$ which are affinely independent. Since $E$ is not essentially $(k - 1)$ flat, Theorem \ref{thm:main_pinned_result} implies that at least one point in $\mathcal{Q}$ is a good pin as desired.
\end{proof}

There is a corresponding structural result for the set of bad pins: it must be rather low-dimensional. We will need to consider multiple cases based on the geometry of $E$. In the case that $E$ is contained in a line, the result will follow from a geometric reduction and a short calculation. When $E$ exhibits less flatness, we will need to extract combinatorial information from Theorem \ref{thm:main_pinned_result} and use the geometry of distance sets. 

\begin{theorem}\label{thm:pinned_distance_structural_bad}
    Let $E \subseteq \mathbb{R}^d$ be a $1$-rectifiable set with positive length. The set $\mathcal{B}$ of bad pins for $E$ is contained in an at most $(d-2)$-dimensional subspace of $\mathbb{R}^d$.
\end{theorem}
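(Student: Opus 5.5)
We split according to the flatness of $E$, as in the preceding discussion: fix $1 \le k \le d$ such that $E$ is essentially $k$-flat but not essentially $(k-1)$-flat, with $k$-plane $P$. Throughout, ``subspace'' means affine subspace. We may discard the $\mathcal{H}^1$-null set $E \setminus P$. This does not change whether a pin is good: if $N$ is $\mathcal{H}^1$-null, then $\Delta_p(N)$ is null, since $z \mapsto |z-p|$ is $1$-Lipschitz.

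\emph{Case $k = d$.} Here $E$ is not essentially $(d-1)$-flat, and Theorem \ref{thm:main_pinned_result} applies to every affinely independent $d$-tuple of pins. The key combinatorial step is the following. Suppose $\mathcal{B}$ were not contained in any $(d-2)$-dimensional affine subspace. Then $\Aff \mathcal{B}$ has dimension at least $d-1$, so we can pick $d$ affinely independent points $p_1,\dots,p_d \in \mathcal{B}$. (Pick greedily: each new point is chosen outside the affine span of those already selected, which is possible while that span has dimension less than $\dim \Aff\mathcal{B}$.) Theorem \ref{thm:main_pinned_result} then says one of the $p_j$ is a good pin, which contradicts $p_j \in \mathcal{B}$. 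Hence $\dim \Aff \mathcal{B} \le d-2$.

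\emph{Case $k < d$.} Rotate and translate so that $P = \mathbb{R}^k \times \{0\}^{d-k}$, and write each pin as $p = (p', h)$ with $p' \in \mathbb{R}^k$ and $h \in \mathbb{R}^{d-k}$. For $z \in E \subseteq P$,
$$|z-p|^2 = |z-p'|^2 + |h|^2,$$
so $\Delta_p(E) = g_h(\Delta_{p'}(E))$ with $g_h(s) = \sqrt{s^2+|h|^2}$ on $[0,\infty)$.

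The claim is that $p$ is good if and only if $p'$ is good for $E$, regarded as a subset of $\mathbb{R}^k$. This is the step needing a little care, because $g_h'(0)=0$ when $h \neq 0$. To handle it, write $g_h = \sqrt{\cdot}\circ q \circ (\cdot)^2$ with $q(t) = t + |h|^2$. The map $s\mapsto s^2$ is a $C^1$ bijection of $[0,\infty)$ whose derivative is nonzero on $(0,\infty)$, so it and its inverse send null sets to null sets. The same holds for the translation $q$, and for $\sqrt{\cdot}$ on $[0,\infty)$. The single point $s=0$ is negligible. Hence $g_h$ is injective and preserves both positivity and nullity of length.

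Consequently $\mathcal{B} = \pi_P^{-1}(\mathcal{B}_P)$, where $\mathcal{B}_P \subseteq \mathbb{R}^k$ is the set of bad pins for $E \subseteq \mathbb{R}^k$.

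\emph{Subcase $k \ge 2$.} Inside $\mathbb{R}^k$, the set $E$ is not essentially $(k-1)$-flat. The case $k=d$, applied in dimension $k$, puts $\mathcal{B}_P$ in an affine subspace of dimension at most $k-2$. Its preimage under $\pi_P$ is then contained in an affine subspace of dimension at most $(k-2)+(d-k) = d-2$.

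\emph{Subcase $k = 1$.} Now $E$ lies in a line, identified with $\mathbb{R}$. For $p' \in \mathbb{R}$, the map $z \mapsto |z - p'|$ is a union of two isometries, on $z \ge p'$ and on $z \le p'$. One of the two pieces of $E$ has positive length, so $\Delta_{p'}(E)$ has positive length. Thus $\mathcal{B}_P = \emptyset$, and therefore $\mathcal{B} = \emptyset$, which is trivially contained in a $(d-2)$-dimensional subspace. (If $d=1$, this case shows $\mathcal{B}=\emptyset$ directly.)

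The main obstacle is the reduction in the case $k<d$, namely justifying that composing with $g_h$ neither creates nor destroys positive length despite the degeneracy at $s=0$. The decomposition of $g_h$ above handles this. The genuinely new input, the combinatorial affine-independence step, is short once Theorem \ref{thm:main_pinned_result} is available.
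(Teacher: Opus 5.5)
Your proposal is correct and follows essentially the same route as the paper. The $1$-flat case gives $\mathcal{B}=\emptyset$ directly; in $\mathbb{R}^k$, choosing $k$ affinely independent bad pins would contradict Theorem \ref{thm:main_pinned_result}; and the lift to $\mathbb{R}^d$ goes through $s\mapsto\sqrt{s^2+|h|^2}$. You are in places more careful than the paper: you justify discarding $E\setminus P$, you handle the vanishing derivative of $g_h$ at $s=0$ (the paper just calls the map locally bilipschitz), and you correctly identify the preimage as $V\times\mathbb{R}^{d-k}$, where the paper writes $V\times\{0\}^{d-k}$.
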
 

\begin{proof}
First consider the case that $E$ is essentially $1$-flat; without loss of generality, we may assume that $E$ is contained within the first coordinate axis $\ell_1$ of $\mathbb{R}^d$. Fix a point $p \in \mathbb{R}^d$; we wish to show that $p$ is a good pin. By translating, we may assume that the first coordinate $p_1 = 0$. Split $E$ into
$$E_+ = \{e \in E : e_1 \ge 0\}$$
and its corresponding $E_-$, where $e_1$ again stands for the first coordinate of the point $e \in \mathbb{R}^d$. Since $E_+ \cup E_- = E$, we may assume without loss of generality that $E_+$ has positive measure.

The origin is a good pin for $E$, because
$$t \in \Delta_0(E_+) \iff (t, 0, \dots, 0) \in E_+$$
and therefore $\mathcal H^1(\Delta_0(E))\geq \mathcal H^1(\Delta_0(E_+)) > 0$. Moreover, if $p$ is any other point with $p_1 = 0$, there is a correspondence
$$t \in \Delta_0(E_+) \iff \sqrt{t^2 + |p|^2} \in \Delta_p(E_+).$$
Since $|p| > 0$, the function $t \mapsto \sqrt{t^2 + |p|^2}$ is locally bilipschitz and therefore $p$ is also a good pin. We may conclude that every point in $\mathbb{R}^d$ is a good pin and indeed that $\mathcal{B} = \emptyset$.

Now consider the higher-dimensional setting, where $E$ is essentially $k$-flat but not essentially $(k - 1)$-flat. Following a rotation and translation, we may assume as before that $E \subseteq \mathbb{R}^{k} \times \{0\}^{d - k}$. Let $\pi : \mathbb{R}^d \to \mathbb{R}^k$ denote the orthogonal projection onto the first $k$ coordinates, $\widetilde{E} = \pi(E)$, and $\widetilde{\mathcal{B}}$ is the set of bad pins for $\widetilde{E}$ in $\mathbb{R}^k$. More precisely,
\[
\widetilde{B} = \{q \in \R^k : \mathcal H^1\big(\Delta_q(\pi(E))\big) = 0 \}.
\]
If $\widetilde{\mathcal{B}}$ were not contained within a $(k - 2)$-dimensional affine subspace of $\mathbb{R}^k$, we could extract an affinely independent set of $k$ bad pins from $\widetilde{B}$, contradicting the result of Theorem \ref{thm:main_pinned_result}. Therefore, $\widetilde{B}$ is contained within a $(k - 2)$-dimensional affine subspace $V_{\widetilde{\mathcal B}}$ of $\mathbb{R}^k$. 

We now return to considering $E$ as a subset of the full-dimensional ambient space $\mathbb{R}^d$. We will show that $\mathcal{B} \subseteq \pi^{-1}(\widetilde{\mathcal{B}})$; this is sufficient to complete the proof, since this latter set is contained in $V_{\widetilde{\mathcal{B}}} \times \{0\}^{d - k}$ which is indeed $(k - 2) + (d - k) = (d-2)$-dimensional. To this end, choose a point $p \in \mathbb{R}^d$ such that $\pi(p) \notin \widetilde{\mathcal{B}}$ so that
$$\mathcal H^1(\Delta_{\pi(p)}(\widetilde E)) > 0.$$
If $\delta = \operatorname{dist}(p, \pi(\mathbb{R}^d))$, we have
$$t \in \Delta_{\pi(p)}(\widetilde E) \iff \sqrt{t^2 + \delta^2} \in \Delta_p(E).$$
Since $\delta > 0$, the same reasoning as above shows $p$ is a good pin. This completes the proof.
\end{proof}

A slightly stronger structural result can be concluded from the proof of the previous result. The bilipschitz equivalence of $\Delta_{\pi(p)}(\widetilde{E})$ and $\Delta_p(E)$ actually implies that $$B = \widetilde{B} \times \mathbb{R}^{d - k};$$ that is, the set of bad pins in the $d$-dimensional setting for set which is not essentially $k$-flat has a Cartesian product structure. Moreover, this result is sharp at the level of dimension, as can be seen by example. If $E \subseteq \mathbb{R}^d$ is the one-dimensional unit circle contained in the plane spanned by the first two coordinate axes, then $\mathcal{B} = \{0\}^2 \times \mathbb{R}^{d - 2}.$

\section{Radial projections}\label{sec:radproj}

We now consider radial projections in the plane. Given a \emph{vantage point} $p \in \mathbb{R}^2$, we consider the projection
$$\pi_p(z) = \frac{z - p}{|z - p|},$$
which can be regarded as a map from $\mathbb{R}^2 \setminus \{p\}$ to either $\mathbb{R}^{1}$ or $\mathbb{S}^1$. Given a set $E \subseteq \mathbb{R}^2$, we will refer to a vantage point as \emph{good} if $\mathcal{H}^1(\pi_p(E)) > 0$, and \emph{bad} otherwise. 

Radial projections have been a rapidly developing topic in the field of projection theory, with foundational results regarding rectifiability. For the purpose of discussion, and its pertinence to our planar result, we restrict our attention to $E\subseteq \mathbb{R}^2$ whose collection of bad vantage points is denoted $\mathcal B$. Building upon classical work of Marstrand \cite{Marstrand1954}, Mattila and Orponen \cite{MO2016,Orponen2017}  showed that given $\dim_H(E) >1$, almost every vantage point $p\in \R^2$ is good. The subcritical regime $\dim_H (E) < 1$ yields results for dimension rather than measure, and has seen significant progress over the last decade, see \cite{BFR2024,BG2024,CS2025,OSW2024}.

The critical case $\dim_H(E) = 1$ more subtly depends on the rectifiability of $E$.
Building upon the Besicovitch--Federer theorem (Theorem \ref{thm:besifed}), Marstrand \cite{Marstrand1987} showed that for purely 1-unrectifiable sets $E$,
\[
\dim (\R^2 \setminus \mathcal B) \leq 1.
\]
If $E$ is 1-rectifiable, we obtain a Besicovitch-type result recovering a result of Orponen--Sahlsten \cite{OS2011}. Besicovitch's classical result for orthogonal projections states that a rectifiable subset of $\mathbb{R}^2$ can have zero projection length in only a small number of directions; we obtain an analogous result for radial projections.

\begin{theorem}\label{thm:main_radial_projections}
Let $E \subseteq\mathbb{R}^2$ be a $1$-rectifiable set with positive length.
\begin{itemize}
    \item If $E$ is essentially $1$-flat, the set of bad vantage points for $E$ is a line.
    \item If $E$ is not essentially $1$-flat, there is at most one bad vantage point for $E$.
\end{itemize}
\end{theorem}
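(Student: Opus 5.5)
We follow the four-step workflow of Section~\ref{sec:overview}, with the arctangent map as the nonlinear projection. For a vantage point $p=(p_1,p_2)$ and $z$ in a suitable region, let $\varphi_p(z)$ be a local branch of the angle $\arg(z-p)$. On any region where the branch is defined, $\pi_p$ and $\varphi_p$ differ by a locally bilipschitz map between an arc of $\mathbb{S}^1$ and an interval. Hence $\mathcal{H}^1(\pi_p(E))>0$ if and only if $\mathcal{H}^1(\varphi_p(E'))>0$ for some piece $E'\subseteq E$ lying in such a region. The gradient is
$$\nabla\varphi_p(z)=\frac{(z-p)^{\perp}}{|z-p|^2}.$$

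\textbf{The embedding and its Jacobian.} For two vantage points $p\neq q$, form $\mathsf{H}_{p,q}(z)=(\varphi_p(z),\varphi_q(z))$. Its Jacobian determinant is
$$\det D\mathsf{H}_{p,q}(z)=\frac{(z-p)\times(z-q)}{|z-p|^2|z-q|^2}.$$
This vanishes exactly when $z$ lies on the line $L_{pq}$ through $p$ and $q$. The numerator equals $(q-p)\times(z-p)$, which is $|q-p|$ times the signed distance from $z$ to $L_{pq}$. So on a region
$$\Omega_\epsilon=\{z:\ \epsilon<\operatorname{dist}(z,L_{pq}),\ |z|<1/\epsilon,\ |z-p|>\epsilon,\ |z-q|>\epsilon\}$$
the determinant is bounded away from $0$ and $\infty$. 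We also restrict to a single side of $L_{pq}$ and to a small ball, so that $\mathsf{H}_{p,q}$ is injective there: on one side, a point is determined by its two viewing angles, by triangulation. Then $\mathsf{H}_{p,q}$ is bilipschitz on such a piece. This mirrors Lemma~\ref{lemma:pinned_distance_canonical_embedding}.

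\textbf{Applying the projection theorem.} Suppose $E$ is not essentially $1$-flat. Then $\mathcal{H}^1(E\setminus L_{pq})>0$ for every pair $p\neq q$. As in the proof of Theorem~\ref{thm:main_pinned_result}, dominated convergence gives a piece $E'\subseteq E\cap\Omega_\epsilon$ of positive length, inside a small ball on one side of $L_{pq}$. The image $\mathsf{H}_{p,q}(E')$ is then a rectifiable set of positive length. Federer's projection theorem (Theorem~\ref{thm:federerprojection-intro}) with $d=2$, followed by the fiber argument used in the proof of Theorem~\ref{thm:main_pinned_result}, gives $\mathcal{H}^1(\varphi_p(E'))>0$ or $\mathcal{H}^1(\varphi_q(E'))>0$. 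So one of $p,q$ is good. Since $p\neq q$ were arbitrary, any two bad points would contradict this, and there is at most one bad vantage point. Note that no affine independence beyond $p\neq q$ is needed, since in the plane two distinct points always span a line.

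\textbf{The flat case.} Suppose $E\subseteq L$ up to a null set. If $p\in L$, then $\pi_p(E)$ is contained in the two antipodal points $\pm$ the direction of $L$, so $p$ is bad. If $p\notin L$, then $\pi_p$ restricted to $L$ is a diffeomorphism onto an open semicircle, and it is locally bilipschitz on compact pieces. Since $\mathcal{H}^1(E)>0$, some compact-range piece of $E$ has positive length, and hence $p$ is good. Therefore $\mathcal{B}=L$.

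\textbf{Main obstacle.} The delicate step is the Jacobian estimate together with the injectivity and localization. We must ensure that $\mathsf{H}_{p,q}$ is genuinely bilipschitz on the chosen piece, which needs local injectivity, a branch choice for the angle, and avoidance of $p$, $q$ and $L_{pq}$. Only then does positivity and rectifiability of length transfer to $\mathsf{H}_{p,q}(E')$. Points of $E$ equal to $p$ or $q$ form a null set and can be discarded.
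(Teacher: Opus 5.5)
Your proposal is correct and follows the paper's proof essentially step for step: the same two-angle canonical embedding $\mathsf{H}_{p,q}$ whose Jacobian is proportional to the distance from $z$ to $L_{pq}$, localization away from $L_{pq}$ by dominated convergence, Federer's projection theorem with the fiber argument, and the same direct treatment of the flat case. In that flat case, as the paper does, you should add that the $\mathcal{H}^1$-null part of $E$ off $L$ has null radial projection, because $\pi_p$ is locally Lipschitz away from $p$; as literally stated, your claim that $\pi_p(E)$ lies in two points is false without this. Your explicit injectivity-by-triangulation and restriction to a small ball is a welcome tightening of the paper's brief passage from Jacobian bounds to bilipschitzness.
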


As in the prior section, our result will follow from proving that a certain map is bilipschitz; we will then be able to reduce matters to using Federer's projection theorem to show the existence of a rich set of good pins. The relevant projections are indeed the maps $\varphi_p$; some care is needed to avoid branch cuts, but this will be addressed as part of a geometric reduction in the proof of Theorem \ref{thm:main_radial_projections}.

\begin{lemma}\label{lemma:lemma:radial_projection_canonical_embedding}
Fix an $\epsilon > 0$ and let $\Omega_{\epsilon} = (-1/\epsilon, 1/\epsilon) \times (\epsilon, 1/\epsilon)$. Define two functions $\varphi_1$ and $\varphi_2$ by 
$$\varphi_j(z) = \arctan \left(\frac{y}{x \pm \frac 1 2}\right),$$
where $j = 1, 2$ correspond to the signs $\pm$, respectively and where the arctangent takes values in $(0, \pi)$. The canonical embedding $\mathsf{H}_{\pm}(z) = (\varphi_1(z), \varphi_2(z))$ is bilipschitz on $\Omega_{\epsilon}$.
\end{lemma}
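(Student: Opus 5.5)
The plan is to establish the two halves of the bilipschitz estimate separately. The upper (Lipschitz) bound comes from a derivative bound on the convex domain $\Omega_\epsilon$. The lower bound comes from exhibiting an explicit inverse map that is Lipschitz on a convex image domain. Write $p_1=(-\tfrac12,0)$, $p_2=(\tfrac12,0)$ and $r_j=|z-p_j|$, so that $\varphi_j(z)$ is the angle that $z-p_j$ makes with the positive $x$-axis. Taking values in $(0,\pi)$ causes no branch issue, since $y>0$ on $\Omega_\epsilon$.

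\emph{Upper bound.} A direct computation gives $\nabla\varphi_j(z)=\big(-y,\;x\pm\tfrac12\big)/r_j^2$. On $\Omega_\epsilon$ we have $r_j\ge y>\epsilon$, so $|\nabla\varphi_j|=1/r_j<1/\epsilon$. Since $\Omega_\epsilon$ is convex, the mean value inequality along segments shows that $\mathsf{H}_\pm$ is Lipschitz with constant $\lesssim 1/\epsilon$. As a sanity check, the Jacobian determinant is
\[
\det D\mathsf{H}_\pm=\frac{(-y)(x-\tfrac12)-(x+\tfrac12)(-y)}{r_1^2r_2^2}=\frac{y}{r_1^2r_2^2}.
\]
This is bounded above and below on $\Omega_\epsilon$, since $\epsilon<y<1/\epsilon$ and $\epsilon<r_j\lesssim 1/\epsilon$. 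This mirrors Lemma \ref{lemma:pinned_distance_canonical_embedding}. A local inverse-function argument alone, however, does not give a global lower Lipschitz bound, so I would instead handle the lower bound globally as follows.

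\emph{Injectivity and explicit inverse.} Pass to the cotangent variables $u=\cot\varphi_1=(x+\tfrac12)/y$ and $v=\cot\varphi_2=(x-\tfrac12)/y$. Solving gives
\[
y=\frac{1}{u-v},\qquad x=\frac{u+v}{2(u-v)}.
\]
Hence $\mathsf{H}_\pm$ is injective, being the composition of the injective map $z\mapsto(u,v)$ with the coordinatewise inverse of $\cot$. Geometrically, the rays from $p_1$ and $p_2$ at two prescribed angles meet in at most one point of the upper half-plane. The image of $\Omega_\epsilon$ in the $(u,v)$-plane is the set
\[
\{\epsilon<u-v<1/\epsilon,\ |u+v|<2(u-v)/\epsilon\}.
\]
This set is defined by linear inequalities and is therefore convex. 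On it, $u-v$ is bounded below by $\epsilon$ and $|u|,|v|\lesssim \epsilon^{-2}$. The partial derivatives of $(u,v)\mapsto(x,y)$ are therefore bounded by a constant depending only on $\epsilon$, and the mean value inequality on this convex set makes the inverse Lipschitz.

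\emph{Compose.} It remains to see that $(\varphi_1,\varphi_2)\mapsto(u,v)$ is Lipschitz on the image. The angles $\varphi_j$ stay in a compact subinterval $[\delta,\pi-\delta]$ of $(0,\pi)$, because $|u|,|v|\lesssim\epsilon^{-2}$ bounds the cotangents. On such an interval $|\cot'|=\csc^2\le\csc^2\delta$, and the interval is convex. Composing the two Lipschitz maps gives $|z-w|\lesssim_\epsilon|\mathsf{H}_\pm(z)-\mathsf{H}_\pm(w)|$. The main point requiring care is this lower bound. Nonvanishing of the Jacobian is not enough by itself, because the image of $\Omega_\epsilon$ under $\mathsf{H}_\pm$ need not be convex. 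The observation that resolves this is that in cotangent coordinates the image becomes a convex, polygonal region on which the explicit inverse has bounded derivative.
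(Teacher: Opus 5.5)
Your proof is correct, and it goes further than the paper's. The paper's proof consists only of the Jacobian computation $\det D\mathsf{H}_\pm = y/\big((x_+^2+y^2)(x_-^2+y^2)\big)$, which you also reproduce. It then asserts that a Jacobian bounded away from $0$ and $\infty$ on $\Omega_\epsilon$ makes $\mathsf{H}_\pm$ bilipschitz.

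Combined with the gradient bound, that assertion only gives a \emph{local} bilipschitz property via the inverse function theorem. It does not by itself give global injectivity or a global lower Lipschitz bound, and this is exactly the gap you identify.

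Your change of variables $u=\cot\varphi_1=(x+\tfrac12)/y$, $v=\cot\varphi_2=(x-\tfrac12)/y$ settles both points at once:
\begin{itemize}
\item it gives the explicit inverse $y=1/(u-v)$, $x=(u+v)/(2(u-v))$, which yields injectivity;
\item it carries $\Omega_\epsilon$ onto a region cut out by four linear inequalities, which is convex, so the mean value inequality applies there;
\item the remaining step $\varphi_j\mapsto\cot\varphi_j$ is harmless, because the angles stay in some $[\delta,\pi-\delta]$.
\end{itemize}

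The paper's shortcut buys brevity. It is also adequate for how the lemma is actually used in the proof of Theorem~\ref{thm:main_radial_projections}. There one only needs $\mathsf{H}_\pm(E)$ to be $1$-rectifiable with positive length. For that it suffices that $\mathsf{H}_\pm$ is Lipschitz and locally bilipschitz: restrict $E$ to a small ball carrying positive $\mathcal{H}^1$-measure.

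Your route proves the lemma as literally stated, with explicit $\epsilon$-dependent constants.
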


The relevance of these maps is geometric. If we take $p = (1/2, 0)$ and $q = (-1/2, 0)$ to be two points in the real axis, $\mathsf{H}_\pm(z)$ returns the angles (measured from the positive $x$-axis) of a point $z$ in the upper half space from the vantage points $p$ and $q$, respectively.

\begin{proof}
    If $z = (x, y)$ and we let $x_{\pm} := x \pm \frac 1 2$, a direct calculation shows that
    $$\det D\mathsf{H}_{\pm}(z) = \frac{y}{(x_+^2 + y^2)(x_-^2 + y^2)}.$$
    This is bounded away from $0$ and $\infty$ on $\Omega_{\epsilon}$, with bilipschitz constants depending on $\epsilon$.
\end{proof}

We next prove the main theorem of this section; the main work that is required is a geometric reduction so that we can use the previous lemma. 

\begin{proof}[Proof of Theorem \ref{thm:main_radial_projections}]
Fix any two vantage points $p$ and $q$ in the plane; by translating, scaling, and rotating the set and vantage points, we may assume without loss of generality that $p = (-1/2, 0)$ and $q = (1/2, 0)$. By reflecting over the $x$-axis if needed, we may also assume that $\mathcal{H}^1(E \cap \mathbb{H}_+) > 0$, where $\mathbb{H}_+$ is the closed upper half plane. 

We first address the case that $E$ is not essentially $1$-flat. Since $\mathcal{H}^1(E \setminus (\mathbb{R} \times \{0\})) > 0$ the dominated convergence theorem implies the existence of an $\epsilon$ such that $\mathcal{H}^1(E \cap \Omega_{\epsilon}) > 0$; we may as well assume that $E \subseteq \Omega_{\epsilon}$. As usual, form the associated canonical embedding $\mathsf{H}_{\pm}$ as in Lemma \ref{lemma:lemma:radial_projection_canonical_embedding}; since this is bilipschitz, Federer's projection theorem implies that
$$0 < \mathcal{H}^1(\mathsf{H}_{\pm}(E)) \lesssim \sum_{j = 1}^2 \int_{\mathbb{R}} \mathcal{H}^0\big(\pi_j^{-1}(t) \cap \mathsf{H}_{\pm}(E)\big) \, d\mathcal{H}^1(t),$$
where $\pi_j$ is orthogonal projection onto the $j$-th coordinate axis in $\mathbb{R}^2$. As such, there is an index $j^* \in \{1, 2\}$ for which 
$$\int_{\mathbb{R}}  \mathcal{H}^0\big(\pi_{j^*}^{-1}(t) \cap \mathsf{H}_{\pm}(E)\big) \, d\mathcal{H}^1(t) > 0.$$

As in the proof of Theorem \ref{thm:main_pinned_result}, we now relate notation to translate from $\pi_{j^*}$ to $\varphi_{j^*}$. If $t \in \mathbb{R}$ is such that $ \mathcal{H}^0\big(\pi_{j^*}^{-1}(t) \cap \mathsf{H}_{\pm}(E)\big) > 0$, we have $\pi_{j^*}(t) \cap \mathsf{H}_{\pm}(E) \ne \emptyset$; that is, $t \in \varphi_{j^*}(E)$. Summarizing,
$$ \mathcal{H}^0\big(\pi_{j^*}^{-1}(t) \cap \mathsf{H}_{\pm}(E)\big) > 0 \implies  \mathcal{H}^0\big(\varphi_{j^*}^{-1}(t) \cap E\big) > 0.$$
Since $\int_{\mathbb{R}} \mathcal{H}^0\big(\pi_{j^*}^{-1}(t) \cap \mathsf{H}_{\pm}(E)\big) \, d\mathcal{H}^1(t) > 0$, there exists a set $T_{j^*} \subseteq \mathbb{R}$ with positive length for which 
$$t \in T \implies \mathcal{H}^0\big(\varphi_{j^*}^{-1}(t) \cap E\big) > 0 \implies t \in \varphi_{j^*}(E).$$
Therefore, $\mathcal{H}^1(\varphi_{j^*}(E)) > 0$. Recalling that $\varphi_j$ encodes the angles subtended by $E$ from the vantage points $p$ and $q$ respectively, we conclude that $\mathcal{H}^1(\pi_p(E)) > 0$ or $\mathcal{H}^1(\pi_q(E)) > 0.$ Therefore, at least one of $\{p, q\}$ is a good vantage point. Since $p$ and $q$ were arbitrary points in $\mathbb{R}^2$, we conclude that there is at most one bad vantage point in the plane. 

We now turn to the flat case, which is much simpler. Since $\mathcal{H}^1$-null sets will also have null radial projections, we may assume without loss of generality that $E \subseteq \mathbb{R}$ viewed as the $x$-axis in $\mathbb{R}^2$. If $p \in \mathbb{R}$, then $\pi_p(E)$ consists of at most two points and $p$ is a bad vantage point. If $p \notin \mathbb{R}$, one may verify that $\pi_p : \mathbb{R} \times \{0\} \to \mathbb{S}^1$ is bilipschitz on any bounded interval; from this, it follows that $p$ is a good vantage point.
\end{proof}

The non-flat conclusion is sharp. For example, let $E$ be the union of two non-collinear line segments meeting at the origin. Its radial image from the origin consists of two points, while Theorem \ref{thm:main_radial_projections} shows that every other vantage point is good. Thus $\mathcal{B}=\{0\}$.

\section{Unions of circles with centers in a rectifiable set}\label{sec:circles}
 Classical results of Marstrand and, independently, Bourgain show that if a set contains a circle centered at every point of a set of positive planar measure, then the set itself must have positive Lebesgue measure \cite{Bourgain1986, Marstrand1987}. 

It is now known that if $E\subseteq \R^2$ satisfies $\dim_H(E) >1$, 
and $r:\R^2\rightarrow (0,\infty)$ is a measurable selection, 
then the union of circles centered at points of $E$, 
$$\mathcal{C}= \bigcup_{v\in E}C(v,r(v))$$
has positive Lebesgue measure \cite{Wolff2000}; see also Mitsis \cite{Mitsis1999}. Further, if $s:= \dim_H(E)<1$, then 
$$\dim_H(\mathcal{C}) = 1+s,$$ see \cite{Oberlin2007}. 
Thus, in both the planar and higher-dimensional settings, Hausdorff dimension $1$ emerges as the critical threshold for positive measure unions of circles and spheres.

At the critical threshold $\dim_H(E)=1$, the situation is more subtle. Simon and Taylor \cite{ST2022} showed that, for circles of fixed radius, rectifiability rather than dimension alone determines whether the union has positive Lebesgue measure. More precisely, 
if $E$ is a $1$-set, then the union of circles centered at points of $E$ (of fixed radius) has positive Lebesgue measure if and only if $E$ is not purely unrectifiable. 


In the present paper, we show that a union of circles centered on a $1$-rectifiable subset of $\mathbb{R}^2$ has positive measure when the radius is allowed to vary, subject to a mild regularity condition. Before stating the regularity condition, note that some degree of regularity is necessary. A construction of Talagrand demonstrates the existence of a Lebesgue measure zero set containing a circle centered at every point of a line \cite{Talagrand1980}. 
In contrast to Talagrand's construction, our main result is that a mild regularity and monotonicity condition on the radius function is sufficient to guarantee that the union of circles centered on a rectifiable set has positive Lebesgue measure. 

\begin{definition}
    A function $r : \mathbb{R}^2 \to (0, \infty)$ is called an \emph{admissible radius function} if
    \begin{itemize}
        \item there exist real numbers $a$ and $b$ for which $0 < a \le r(z) \le b < \infty$ for all $z \in \mathbb{R}^2$,
        \item $r$ is differentiable with $\|\nabla r\|_\infty  < \infty$, and 
        \item the partial derivatives $r_x$ and $r_y$ do not change sign.
    \end{itemize}
\end{definition}

\noindent Our main result is the following.

\begin{theorem}\label{thm:main_circle_union_result}
Let $r$ be an admissible radius function and $E$ a $1$-rectifiable subset of $\mathbb{R}^2$ with positive $\mathcal{H}^1$-measure. If 
$$\mathcal{C} = \bigcup_{v \in E} C(v, r(v)),$$
then $\mathcal{C}$ has positive two-dimensional Lebesgue measure.
\end{theorem}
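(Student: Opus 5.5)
Following the framework of Section~\ref{sec:overview}, I will reduce positive area of $\mathcal{C}$ to a Fubini argument over a positive-measure set of directions. This follows the Simon--Taylor approach for fixed radius. Parametrize $\mathcal{C}$ by $(v,\theta)\mapsto v + r(v)\theta$ with $v \in E$, $\theta \in \mathbb{S}^1$.

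For each fixed direction $\theta = (\cos\alpha, \sin\alpha)$, define the translated-and-scaled copy
$$E_\theta := \{v + r(v)\theta : v \in E\} \subseteq \mathcal{C}.$$
The plan is to show that $E_\theta$ is a $1$-rectifiable set. Then I want $\mathcal{H}^1(\pi_{\theta^\perp}(E_\theta))>0$ for a positive-measure set of $\theta$, or more precisely that the sets $E_\theta$ sweep out positive area. Concretely, I will use polar-type coordinates: the map $\Phi(v,\theta) = v + r(v)\theta$ on $E\times \mathbb{S}^1$ is Lipschitz (since $\nabla r$ is bounded). By the coarea/area formula for the $2$-rectifiable set $E\times\mathbb{S}^1$ (a product of rectifiable sets), $\mathcal{L}^2(\mathcal{C})>0$ as soon as the $2$-dimensional Jacobian $J\Phi$ is nonzero on a subset of positive $\mathcal{H}^1\times\sigma$ measure. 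Here the area formula gives $\int_{\mathcal{C}} \#\Phi^{-1}(w)\,dw = \int J\Phi$. The multiplicity is finite a.e., or at least the left side is positive only if $\mathcal{L}^2(\mathcal{C})>0$.

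Computing at a point $v$ of $E$ with approximate unit tangent $\tau$, we have $\partial_\tau \Phi = \tau + (\nabla r\cdot\tau)\theta$ and $\partial_\alpha\Phi = r(v)\theta^\perp$. Hence
$$J\Phi = r(v)\,\bigl|\det(\tau + (\nabla r\cdot \tau)\theta,\ \theta^\perp)\bigr| = r(v)\,\bigl|\tau\cdot\theta + \nabla r\cdot\tau\bigr|.$$
The Jacobian vanishes only when $\tau\cdot\theta = -\nabla r(v)\cdot\tau$. For fixed $v$, this is at most two values of $\theta$ (or all $\theta$ only if $\tau\cdot\theta$ were constant, which is impossible). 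So $J\Phi>0$ for $\mathcal{H}^1\times\sigma$-a.e.\ $(v,\theta)$, using $r\ge a>0$. To phrase this in the paper's language, for a fixed angle $\alpha$ I would form a canonical embedding whose two coordinates are the coordinates of $\Phi(\cdot,\theta)$ in the basis $\{\theta,\theta^\perp\}$, and invoke Federer's projection theorem to see that $\pi_{\theta^\perp}$ of $E_\theta$ has positive length whenever $\tau\cdot\theta^\perp\neq 0$ on positive measure. Integrating in $\alpha$ then gives positive area.

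The main obstacle is the measure-theoretic justification: making the rectifiable parametrization rigorous (weak tangents of $E$, differentiability of $r$ along $E$) and handling the possibly infinite multiplicity in the area formula. The sign conditions on $r_x, r_y$ in the admissibility definition suggest the authors localize to a quadrant of directions where $\tau\cdot\theta + \nabla r\cdot\tau$ has a definite sign, making $\Phi$ locally bilipschitz. I would first try this: restrict $E$ to a piece where $\tau$ lies in a fixed small cone, restrict $\theta$ to a cone where $|\tau\cdot\theta+\nabla r\cdot\tau|\ge c>0$, and conclude by Lemma-style Jacobian bounds.
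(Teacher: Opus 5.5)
Your core argument is correct, and it follows a genuinely different route from the paper's. The paper never works on the two-dimensional parameter space $E\times\mathbb{S}^1$. Instead it:
\begin{enumerate}[(1)]
\item slices $\mathcal{C}$ by vertical lines $\{\alpha\}\times\mathbb{R}$ and encodes the upper semicircles via $\varphi_\alpha(z)=y+\sqrt{r^2(z)-(\alpha-x)^2}$;
\item proves in Lemma~\ref{lemma:circles_canonical_embedding} that $(\varphi_\alpha,\varphi_\beta)$ is bilipschitz on a vertical strip when $a/20<\alpha-\beta<a/10$. The bounds $a\le r\le b$ keep $(x-\alpha)/r$ and $(x-\beta)/r$ in $(0,1/2)$ and bound term (A) below, while $r_x\le 0\le r_y$ is exactly what makes terms (B) and (C) nonnegative;
\item applies Federer's projection theorem to each such pair to get a good index, concluding that either every $\alpha\in(a/11,a/10)$ or every $\beta\in(0,a/100)$ yields a slice of positive length;
\item finishes with Fubini.
\end{enumerate}
You instead apply the area formula once to $\Phi(v,\theta)=v+r(v)\theta$. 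Your Jacobian $r(v)\,|\tau\cdot\theta+\nabla r(v)\cdot\tau|$ vanishes for at most two $\theta$ per $v$, whatever the signs of $r_x$ and $r_y$. So your argument needs only positivity of $r$ and a Lipschitz bound. It proves a stronger statement in which the sign conditions and two-sided bounds are superfluous. The paper's route, in exchange, stays inside its canonical-embedding/Federer framework and yields the slice-wise ``good witness'' dichotomy above.

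To make your proof airtight with minimal machinery, pass (after a rotation) to a positive-length piece of $E$ lying on a Lipschitz graph $\gamma(s)=(s,g(s))$, and set $A=\gamma^{-1}(E)$, so that $|A|>0$. Then apply the Euclidean area formula on $A\times[0,2\pi)$ to the Lipschitz map $F(s,\omega)=\gamma(s)+r(\gamma(s))\theta_\omega$, where $\theta_\omega=(\cos\omega,\sin\omega)$. Here $\det DF=r(\gamma(s))\bigl(\gamma'(s)\cdot\theta_\omega+\nabla r(\gamma(s))\cdot\gamma'(s)\bigr)$ with $\gamma'(s)=(1,g'(s))\neq 0$. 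This avoids approximate tangents and product Hausdorff measures.

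Multiplicity is not an obstacle: $\int\#\bigl(F^{-1}(w)\cap(A\times[0,2\pi))\bigr)\,dw$ vanishes whenever $F(A\times[0,2\pi))\subseteq\mathcal{C}$ is Lebesgue-null, regardless of the multiplicities.

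Finally, drop the side remarks. Positive length of $\pi_{\theta^\perp}(E_\theta)$ for many $\theta$ does not by itself integrate to positive area. Transversal sweeping is governed by your Jacobian, not by the condition $\tau\cdot\theta^\perp\neq 0$. No cone localization or bilipschitz bound is needed either.
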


As in the previous sections, our result will follow by first identifying the relevant projection-type map, proving a regularity result for the associated canonical embedding, and then extracting combinatorial information. Given $\alpha\in \R$ and a point $z = (x, y)\in \R^2$, define
$$\varphi_{\alpha}(z) = y + \sqrt{r^2(z) - (\alpha - x)^2}.$$
Geometrically, $\varphi_{\alpha}(x, y)$ yields the $y$-coordinate of the intersection of the upper half of $C(z, r(z))$ and the vertical line $\ell_{\alpha} = \{\alpha\} \times \mathbb{R}$. The projective nature of the problem is now apparent: ignoring some issues regarding the domain of definition of $\varphi_{\alpha}$, we have that $\varphi_{\alpha}(E) = \{\alpha\} \times (\mathcal{C}_+ \cap \ell_{\alpha})$, where $\mathcal{C}_+$ refers to the union of the upper halves of the circles $C(v, r(v))$. Taking Lebesgue measure on both sides and integrating in $\alpha$ then shows a relationship between an ``average projection length'' and the area of $\mathcal{C}_+$. We now proceed with the details of the proof, beginning with a study of the associated canonical embedding.

\begin{lemma} \label{lemma:circles_canonical_embedding}
    Fix numbers $0 < a \le b < \infty$. Let $r : \mathbb{R}^2 \to (a, b)$ be a differentiable function for which $r_x \le 0$ and $r_y \ge 0$ and with bounded gradient. If $\alpha$ and $\beta$ are real numbers for which $\frac{a}{20} < \alpha - \beta < \frac{a}{10}$, then the associated canonical embedding $\mathsf{H}_{(\alpha, \beta)} = (\varphi_{\alpha}, \varphi_{\beta})$ is bilipschitz on the vertical strip with $\alpha < x < \alpha + \frac a {10}.$ The bilipschitz constant for $\mathsf{H}_{(\alpha, \beta)}$ depends on $a, b$, and $r$ but not on the selection of $\alpha$ and $\beta$.
\end{lemma}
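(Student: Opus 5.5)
The plan is to compute the Jacobian of $\mathsf{H}_{(\alpha,\beta)}$ directly, as in Lemmas \ref{lemma:pinned_distance_canonical_embedding} and \ref{lemma:lemma:radial_projection_canonical_embedding}. The sign conditions $r_x\le 0$, $r_y\ge 0$ should make every term of the determinant nonnegative except one dominant term, and that term is bounded below by a constant depending only on $a,b$.

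\emph{Well-definedness and upper bounds.} Write $u_\alpha = \alpha - x$, $u_\beta=\beta-x$, and $s_\alpha=\sqrt{r^2-u_\alpha^2}$, $s_\beta=\sqrt{r^2-u_\beta^2}$. On the strip $\alpha<x<\alpha+\frac a{10}$ we have $-\frac a{10}<u_\alpha<0$ and $u_\beta=u_\alpha-(\alpha-\beta)$, so $-\frac{a}{5}<u_\beta<u_\alpha<0$. Hence $s_\alpha,s_\beta\ge \sqrt{a^2-a^2/25}\gtrsim a$. Thus both $\varphi_\alpha$ and $\varphi_\beta$ are defined and differentiable there. Their partial derivatives are
$$\partial_x\varphi_\alpha=\frac{rr_x+u_\alpha}{s_\alpha},\qquad \partial_y\varphi_\alpha = 1+\frac{rr_y}{s_\alpha},$$
and similarly for $\beta$. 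These are bounded in terms of $a$, $b$ and $\|\nabla r\|_\infty$, uniformly in $\alpha,\beta$. This gives the Lipschitz upper bound.

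\emph{Lower bound on the determinant.} Expanding and regrouping gives
$$\det D\mathsf{H}_{(\alpha,\beta)} = rr_x\Big(\frac1{s_\alpha}-\frac1{s_\beta}\Big) + \Big(\frac{u_\alpha}{s_\alpha}-\frac{u_\beta}{s_\beta}\Big) + \frac{rr_y\,(u_\alpha-u_\beta)}{s_\alpha s_\beta}.$$
The three terms behave as follows.
\begin{itemize}
\item First term: since $|u_\beta|>|u_\alpha|$ we have $s_\beta<s_\alpha$, so the bracket is negative. Together with $r_x\le 0$ this makes the term nonnegative.
\item Third term: it is nonnegative because $r_y\ge0$ and $u_\alpha-u_\beta=\alpha-\beta>0$.
\item Middle term: $g(u)=u/\sqrt{r^2-u^2}$ (with $r=r(z)$ frozen) has $g'(u)=r^2/(r^2-u^2)^{3/2}\ge 1/r\ge 1/b$. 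By the mean value theorem the middle term is at least $(\alpha-\beta)/b>a/(20b)$.
\end{itemize}
Hence $\det D\mathsf{H}\ge a/(20b)$ on the strip, uniformly in $\alpha,\beta$. The same sign bookkeeping combined with the upper bounds on the entries controls $\|D\mathsf{H}^{-1}\|$ uniformly.

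\emph{Main obstacle: passing from the Jacobian bound to a global lower Lipschitz bound.} A Jacobian bound alone gives only local bilipschitzness via the inverse function theorem. For $z,w$ in the strip, which is convex, write
$$\mathsf{H}(z)-\mathsf{H}(w)=\Big(\int_0^1 D\mathsf{H}(w+t(z-w))\,dt\Big)(z-w).$$
I would then try to show that the averaged matrix still has determinant $\gtrsim a/b$. My first attempt exploits the fixed sign pattern of the entries: $\partial_x\varphi_\alpha<0$, $\partial_y\varphi_\alpha \ge 1$, with $\partial_x\varphi_\beta$ more negative than $\partial_x\varphi_\alpha$ by a definite amount. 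Normalizing each row by its $y$-entry turns the rows into slopes. The determinant condition then says the slope of the $\beta$-row exceeds that of the $\alpha$-row in absolute value by $\gtrsim a/b$ pointwise. This is a cone condition, and it is preserved under averaging.

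If that fails, I would fall back on proving injectivity directly. For fixed $\alpha$, the level sets of $\varphi_\alpha$ are graphs over $x$ with slopes strictly separated from those of $\varphi_\beta$, so two such curves cross at most once. I would then combine this injectivity with the local estimate. For the later application only positivity of $\mathcal{H}^1(\mathsf{H}(E))$ is needed, so a local bilipschitz statement on small balls, after localizing $E$, would also suffice.
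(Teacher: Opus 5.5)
Your computation of $\det D\mathsf{H}_{(\alpha,\beta)}$ is the paper's proof. Your three terms are its (B), (A), (C). The signs come from the same facts ($r_x\le 0$, $r_y\ge 0$, $s_\beta<s_\alpha$), and the bound $g'\ge 1/r$ (the paper's $\psi'\ge 1$) gives the same $(\alpha-\beta)/b>a/(20b)$. The paper stops there and reads bilipschitzness directly off the two-sided Jacobian bound. You are right that more is needed: global injectivity and a global lower Lipschitz constant. Also, the $C^1$ inverse function theorem is not literally available, since $r$ is only assumed differentiable.

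The route you try first for that extra step does not work. With $\gamma(t)=w+t(z-w)$, the rows of $\bar M=\int_0^1 D\mathsf{H}(\gamma(t))\,dt$ are averaged separately, so
\[
\det\bar M=\int_0^1\!\!\int_0^1\det\begin{bmatrix}\nabla\varphi_\alpha(\gamma(t))\\ \nabla\varphi_\beta(\gamma(s))\end{bmatrix}\,ds\,dt,
\]
and your pointwise bound only speaks to $t=s$. Off the diagonal, the cancellation of the $r_xr_y$ products is lost, and that cancellation is exactly what made your third term sign-definite. Write $P=rr_x$, $Q=rr_y$, $p=\gamma(t)$, $q=\gamma(s)$. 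The $(t,s)$ and $(s,t)$ terms together contain
\[
\big(P(p)Q(q)-P(q)Q(p)\big)\Big(\frac{1}{s_\alpha(p)s_\beta(q)}-\frac{1}{s_\alpha(q)s_\beta(p)}\Big).
\]
This has no sign. Its size is up to roughly $\|\nabla r\|_\infty^2$ times the oscillation of $s_\alpha/s_\beta$ along the segment, and nothing in your argument bounds it by $a/(20b)$. In cone language, the problem is threefold. Your separation compares the two rows \emph{at the same point}. The slopes themselves move with $\nabla r(\gamma(t))$. The rows carry different weights $\partial_y\varphi_\alpha\neq\partial_y\varphi_\beta$. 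So there are no fixed disjoint cones, and ``preserved under averaging'' does not follow.

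Your fallback is the right idea, and it can be completed in the following steps.

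\emph{Level sets as graphs.} Since $1\le\partial_y\varphi_\alpha,\partial_y\varphi_\beta\le C$, each level set of $\varphi_\alpha$ or $\varphi_\beta$ in the strip is a Lipschitz graph $y=Y(x)$ over all of $(\alpha,\alpha+\frac a{10})$. It is differentiable with $Y'=-\partial_x\varphi/\partial_y\varphi$, which needs only differentiability of $r$.

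\emph{Injectivity.} If $Y_\alpha$ and $Y_\beta$ meet at $x_0$, then at the common point $(Y_\beta-Y_\alpha)'(x_0)=\det D\mathsf{H}/(\partial_y\varphi_\alpha\,\partial_y\varphi_\beta)>0$. Hence $Y_\beta-Y_\alpha$ vanishes at most once. The slopes are compared only at the common point, which is all that is needed.

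\emph{What this gives.} Injectivity plus $\|D\mathsf{H}^{-1}\|\lesssim 1$ only makes $\mathsf{H}^{-1}$ Lipschitz on convex pieces of the image. As you note, that already suffices for Theorem \ref{thm:main_circle_union_result}.

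\emph{Global bound.} For the lemma as stated, make the same idea quantitative. The map $\Phi(x,y)=(x,\varphi_\alpha(x,y))$ is a bilipschitz bijection of the strip onto itself. Then $\mathsf{H}\circ\Phi^{-1}(x,c)=(c,\tilde\varphi(x,c))$ with $\tilde\varphi(x,c)=\varphi_\beta(x,Y_\alpha(x;c))$, and
\[
\partial_x\tilde\varphi=-\frac{\det D\mathsf{H}}{\partial_y\varphi_\alpha}\le -c_0<0,\qquad 0<\partial_c\tilde\varphi\le C.
\]
Set $\kappa=c_0/(2C)$ and compare $|c_1-c_2|$ with $\kappa|x_1-x_2|$. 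In one case use the first coordinate; in the other use the $x$-monotonicity of $\tilde\varphi$ along a horizontal segment. This gives a global lower Lipschitz bound with constants depending only on $a$, $b$ and $\|\nabla r\|_\infty$, not on $\alpha$ or $\beta$.
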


\begin{proof}
    We begin with a calculation. Since $r$ is differentiable on its domain, the gradient of $\varphi_{\alpha}$ can be calculated as
    $$\nabla \varphi_{\alpha}(x, y) = \begin{bmatrix} \frac{r(x, y) r_x(x, y) + (\alpha - x)}{\sqrt{r^2(x, y) - (\alpha - x)^2}} \\ 1 + \frac{r(x, y) r_y(x, y)}{\sqrt{r^2(x, y) - (\alpha - x)^2}}\end{bmatrix} =: \begin{bmatrix} f_{\alpha}(x, y) \\ 1 + g_{\alpha}(x, y)\end{bmatrix}.$$
    Fix $\alpha < \beta$ and form the associated canonical embedding
    $$\mathsf{H}_{(\alpha, \beta)}(z) = (\varphi_{\alpha}(z), \varphi_{\beta}(z)).$$
    Using the gradient calculation, we find that
    $$\det D\mathsf{H}_{(\alpha, \beta)}(z) = \underbrace{f_{\alpha}(z) - f_{\beta}(z)}_{(I)} + \underbrace{\big(f_{\alpha}(z) g_{\beta}(z) - f_{\beta}(z) g_{\alpha}(z)\big)}_{(II)}.$$
    Our goal is to show that this is bounded above and below on the appropriate domain. Suppressing the arguments of the function $r$ for ease of notation, we can rewrite term (II) as 
    \begin{align*}
        (II) &= \frac{\big(r \cdot r_x + (\alpha - x)\big)\big(r\cdot r_y\big) - \big(r \cdot r_x + (\beta - x)\big)\big(r\cdot r_y\big)}{\sqrt{r^2 - (\alpha - x)^2}\sqrt{r^2 - (\beta - x)^2}} \\
        &= \frac{r \cdot r_y \cdot \big((\alpha - x) - (\beta - x)\big)}{\sqrt{r^2 - (\alpha - x)^2}\sqrt{r^2 - (\beta - x)^2}} \\
        &= \frac{r \cdot r_y \cdot (\alpha - \beta)}{\sqrt{r^2 - (\alpha - x)^2}\sqrt{r^2 - (\beta - x)^2}}.
    \end{align*}
    Likewise, term (I) can be rewritten as
    \begin{align*}
        (I) &= r \cdot r_x \left[\frac{1}{\sqrt{r^2 - (x - \alpha)^2}} - \frac{1}{\sqrt{r^2 - (x - \beta)^2}}\right] + \left[\frac{x - \beta}{\sqrt{r^2 - (x - \beta)^2}} - \frac{x - \alpha}{\sqrt{r^2 - (x - \alpha)^2}}\right].
    \end{align*}
    Combining (I) and (II) and refactoring, we have
    \begin{align*}
        \det D\mathsf{H}_{(\alpha, \beta)}(x, y) &= \left[\frac{(x - \beta) / r}{\sqrt{1 - ((x - \beta)/r)^2}} - \frac{(x - \alpha) / r}{\sqrt{1 - ((x - \alpha)/r)^2}} \right] \\
        &\quad\quad + r_x \left[\frac{1}{\sqrt{1 - ((x - \alpha) / r)^2}} - \frac{1}{\sqrt{1 - ((x - \beta)/r)^2}}\right] \\
        &\quad\quad + \frac{r_y}{r} \left[\frac{\alpha - \beta}{\sqrt{1 - ((x - \alpha)/r)^2} \sqrt{1 - ((x - \beta)/r)^2}}\right] \\
        &= (\text{A}) + (\text B) + (\text C).
    \end{align*}

For term (A), note that the function $\psi(t) = \frac{t}{\sqrt{1 - t^2}}$ is strictly increasing, and in fact $\psi'(t) \ge 1$. As a consequence, we have that
$$(\text{A}) \ge \frac{x - \beta}{r} - \frac{x - \alpha}{r} = \frac{\alpha - \beta}{r} \ge \frac{\alpha - \beta}{\|r\|_{\infty}} \ge \frac{a/20}{b} >  0.$$
Term (A) is also bounded above, since both $(x - \beta)/r$ and $(x - \alpha)/r$ lie in the interval $(0, 1/2)$. Note that these bounds depend on $r$ (and hence on $a$ and $b$), but not on $\alpha$ or $\beta$. 

Term (B) can be handled in a similar manner. Note that $\eta(t) = \frac{1}{\sqrt{1 - t^2}}$ is a strictly increasing function, so the bracketed term is negative (but as in Term (A), it is bounded in absolute value). Because $r_x \le 0$, it follows that $(\text B) \ge 0$ but is bounded above.

Term (C) can also be easily bounded: because $(x - \beta)/r$ and $(x - \alpha)/r$ both lie in $(0, 1/2)$, the bracketed portion is between $\alpha - \beta$ and $\frac 4 3 (\alpha - \beta)$. In any case, since $r_y/r$ is nonnegative and bounded above, the overall term is nonnegative and bounded. 

Combining the three upper and lower bounds, we conclude that the Jacobian determinant is bounded away from $0$ and $\infty$ with constants independent of $\alpha$ and $\beta$, as claimed. 
\end{proof}

We are now ready to prove the main theorem of this section.
\begin{proof}[Proof of Theorem \ref{thm:main_circle_union_result}] Suppose that $r$ is an admissible radius function with lower bound $r(z) \ge a > 0$. Since $r_x$ and $r_y$ do not change sign, and since area is preserved under reflections, we may assume without loss of generality that $r_x \le 0$ and $r_y \ge 0$ in order to apply the previous lemma. Furthermore, we may also assume that $E \subseteq (\frac a {10}, \frac{11a}{100}) \times \mathbb{R}$. 

Fix any $\beta \in (0, a/100)$ and $\alpha \in (a/11, a/10)$ and note that $a/20 < \alpha - \beta < a/10$. By Lemma \ref{lemma:circles_canonical_embedding}, the associated canonical embedding $\mathsf{H}_{(\alpha, \beta)}$ is bilipschitz on the vertical strip $(\alpha, \alpha + a/10) \times \mathbb{R}$; this strip contains $(a/10, 11a/100) \times \mathbb{R}$ for any such $\alpha$. By Federer's projection theorem,
$$0 < \mathcal{H}^1(\mathsf{H}_{(\alpha, \beta)}(E)) \lesssim \sum_{j = 1}^2 \int_{\mathbb{R}} \mathcal{H}^0\big(\pi_j^{-1}(t) \cap \mathsf{H}_{(\alpha, \beta)}(E)\big) d\mathcal{H}^1(t),$$
where $\pi_j$ is orthogonal projection onto the $j$-th coordinate axis of $\mathbb{R}^2$. As in the proof of Theorem \ref{thm:main_pinned_result}, we now unravel the relationship between the coordinates of the embedding and the orthogonal projection. We have that
$$
    \mathcal{H}^0\big(\pi_1^{-1}(t) \cap \mathsf{H}_{(\alpha, \beta)}(E)\big) > 0 \implies \pi_1^{-1}(t) \cap \mathsf{H}_{(\alpha, \beta)}(E) \ne \emptyset
    \implies t \in \varphi_{\alpha}(E).
$$
Similar reasoning relates $\pi_2$ and $\beta$. From Federer's projection theorem, we conclude that there exists a set $T \subseteq\mathbb{R}$ with positive $\mathcal{H}^1$ (or Lebesgue) measure for which $T \subseteq \varphi_{\alpha}(E)$ or $T \subseteq \varphi_{\beta}(E)$, depending on which orthogonal projection actually has positive integral. In any case, either $|\varphi_{\alpha}(E)| > 0$ or $|\varphi_{\beta}(E)| > 0$. 

We now extract the relevant combinatorial information and complete the proof. If there exists an $\alpha \in (a/11, a/10)$ for which $|\varphi_{\alpha}(E)| = 0$, then $|\varphi_{\beta}(E)| > 0$ for every $\beta \in (0, a/100)$. Alternatively, $|\varphi_{\alpha}(E)| > 0$ for every $\alpha \in (a/11, a/10)$. Since the union of circles is a measurable set, we may apply the Fubini--Tonelli theorem to conclude that 
$$\left|\bigcup_{z\in E} C(z,r(z))\right| \geq \int_0^{a/10} |\varphi_{\gamma}(E)| \, d\gamma > 0.$$
Since $\{\gamma\} \times \varphi_{\gamma}(E) \subseteq \mathcal{C}$ for each $\gamma$ in this interval, $\mathcal{C}$ has positive area.
\end{proof}

Theorem \ref{thm:main_circle_union_result} is closely related to a recent result of Iosevich, Li, and Taylor on unions of variable hypersurfaces. In \cite{ILT26}, it is proved that if $E$ is $1$-rectifiable with $\mathcal{H}^1(E)>0$, then the union of a family of hypersurfaces satisfying suitable curvature hypotheses has positive Lebesgue measure. In the planar setting considered here, the hypersurfaces are circles with centers in $E$ and radii prescribed by an admissible radius function.

Although both results establish positivity of measure at the rectifiable endpoint, the underlying mechanisms are quite different. The proof in \cite{ILT26} relies on smoothing estimates arising from the mapping properties of Fourier integral operators, whereas our argument is entirely geometric and is based on the nonlinear two-projection theorem.  In this way, Theorem \ref{thm:main_circle_union_result} may be viewed as a variable-radius circle analogue of the rectifiable endpoint theorem in \cite{ILT26}, established through a substantially different method.

We also note a complementary result of {\L}aba, McDonald, and Taylor \cite{LMT26}. They show that if the centers lie in a purely unrectifiable self-similar $1$-set, then the union of circles has Lebesgue measure zero whenever the radii vary sufficiently slowly. Taken together, these results highlight a sharp contrast between rectifiable and purely unrectifiable sets at the critical dimension $1$ and demonstrate that rectifiability plays a decisive role in determining the measure-theoretic size of unions of circles with variable radii.

\section*{Future Directions}
As observed in Section \ref{sec:overview}, our methods for studying pinned distances, radial projections and unions of circles all follow a similar strategy. In all three examples, our strategy centers around the stability of the canonical encoding map $\mathsf{H}_{\boldsymbol{\alpha}}$, and an application of Federer's projection theorem. In follow-up work, Marshall \cite{Marshall2026FiniteGoodWitnesses} explores this framework, proving similar good witness results (as well as new local stability results for good witnesses) for a wide class of generalized curve projections. We direct the reader to this forthcoming work for more.

\bibliography{refs}
\bibliographystyle{abbrv}
                
\end{document}